\newtheorem{theorem}{Theorem}
\theoremstyle{plain}
\newtheorem{acknowledgement}{Acknowledgement}
\newtheorem{corollary}{Corollary}
\newtheorem{definition}{Definition}
\newtheorem{example}{Example}
\newtheorem{lemma}{Lemma}
\newtheorem{proposition}{Proposition}
\newtheorem{remark}{Remark}
\numberwithin{equation}{section}
\begin{document}
\title[Stationary Markov processes]{On stationary Markov processes with
polynomial conditional moments}
\author{Pawe\l\ J. Szab\l owski}
\address{Department of Mathematics and Information Sciences, \\
Warsaw University of Technology \\
ul. Koszykowa 75, 00-662 Warsaw, Poland}
\email{pawel.szablowski@gmail.com}
\date{October 2013}
\subjclass[2000]{Primary 46N30, 60G10; Secondary 60J35, 60G46}
\keywords{polynomial martingales, orthogonal polynomials, Markov processes,
stationary processes, harnesses, quadratic harnesses, infinitely divisible
marginal distributions, Lancaster expansion.}

\begin{abstract}
We study a class of stationary Markov processes with marginal distributions
identifiable by moments such that every conditional moment of degree say $m$
is a polynomial of degree at most $m\;\text{.}\;$ We show that then under
some additional, natural technical assumption there exists a family of
orthogonal polynomial martingales. More precisely we show that such a family
of processes is completely characterized by the sequence $\{(\alpha _{n}
,p_{n})\}_{n \geq 0}$ where $\alpha _{n}^{ \prime } s$ are some positive
reals while $p_{n}^{ \prime } s$ are some monic orthogonal polynomials.
Paper of Bakry\&Mazet(2003) assures that under some additional mild
technical conditions each such sequence generates some stationary Markov
process with polynomial regression.

We single out two important subclasses of the considered class of Markov
processes. The class of harnesses that we characterize completely. The
second one constitutes of the processes that have independent regression
property and are stationary. Processes with independent regression property
so to say generalize ordinary Ornstein-Uhlenbeck (OU) processes or can also
be understood as time scale transformations of L{\'e}vy processes. We list
several properties of these processes. In particular we show that if these
process are time scale transforms of L{\'e}vy processes then they are not
stationary unless we deal with classical OU- process. Conversely, time scale
transformations of stationary processes with independent regression property
are not L{\'e}vy unless we deal with classical OU process.
\end{abstract}

\maketitle

\section{Introduction}

In this paper we analyze a subclass $S$ of Markov random processes with
polynomial conditional moments that was described in \cite{SzablPoly}.
Namely we confine analysis to Markov processes with polynomial conditional
moments that are additionally stationary. Let $\mathbb{T}$ denote either set
of reals - $\mathbb{\mathbb{R}}$ or $\mathbb{\mathbb{\mathbb{Z}}}$ the set
of integers. By stationary Markov processes we mean those Markov processes $%
X=(X_{t})_{t\in T}$ that have marginal distributions that do not depend on
the time parameter and the property that conditional distributions of say $%
X_{t}$ given $X_{s}$ does depend on $t-s.$

In more detail let $X=(X_{t})_{t\in \mathbb{T}}$ be a real stochastic
process defined on some probability space $(\Omega ,\mathcal{F},P).$ We will
assume that $\forall n\in N,$ $t\in \mathbb{T}$ : $E|X_{t}|^{n}<\infty .$
More precisely we assume that distributions of $X_{t}$ will be identifiable
by their moments. This assumption is slightly stronger assumption than the
existence of all moments. For example it is known that if $\exists \beta
>0:\int \exp (\beta |x|)d\mu (x)<\infty \text{,}$ then measure $\mu $ is
identifiable by its moments. Here $\mu $ denotes distribution of $X_{0}.$ In
fact there exist other conditions assuring this. For details see e.g. \cite%
{Sim98}.

When needed we will assume that for $\forall t\in \mathbb{T}$ : $suppX_{t}$
contains infinite number of points. Sometimes we will omit this assumption
but it will be indicated when. Then, if support of $X_{0}$ consists of $v$
points, the distribution concentrated on these points is identifiable by $v$
orthogonal polynomials including polynomial with the index $0$ equal to $1.$

To fix notation let us denote $\mathcal{F}_{\leq s}=\sigma (X_{r}:r\in
(-\infty ,s]\cap \mathbb{T})$, $\mathcal{F}_{\geq s}=\sigma (X_{r}:r\in
\lbrack s,\infty )\cap \mathbb{T})$ and $\mathcal{F}_{s,u}=\sigma
(X_{r}:r\notin (s,u),r\in \mathbb{T})$.

Moreover let us assume that $\forall n:0<$ $n\leq \nu -1,$ $s_{i}\in \mathbb{%
T},s_{i}\neq s_{j},$ for $i\neq j$ and $i,j=1,\ldots ,n,$ matrix $%
[cov(X_{s_{i}},X_{s_{j}})]_{i,j=1,\ldots ,n}$ is non-singular. Processes
satisfying this assumptions will be called totally linearly dependent
(briefly TLD).

Notice that processes that for every $t\in \mathbb{T}$ are constant i.e. $%
X_{t}=X$ for some random variable $X$ are not TLD.

We will also assume that $\forall m,j:EX_{t}^{m}X_{s}^{j}$ are a continuous
functions of $|t-s|\in \mathbb{T}$ at least at $0$ i.e. for $s$ $=t$. Such
processes will be called mean-square continuous (briefly MSC).

Let us remark that the sequence of independent random variables indexed by
some discrete linearly ordered set are not MSC.

To fix notation let us denote by $\mu (.)$ and by $\eta (.|y,\tau )$
respectively marginal stationary distribution and transition distribution of
our Markov process. That is $P(X_{t}\in A)=\int_{A}\mu (dx)$ and $%
P(X_{t+\tau }\in A|X_{t}=y)=\int_{A}\eta (dx|y,\tau )$. Stationarity of $%
\mathbf{X}$ means thus that $\forall \mathbb{T}\ni \tau \neq 0,B\in B$ 
\begin{equation*}
\mu (B)=\int \eta (B|y,\tau )\mu (dy).
\end{equation*}

By $L_{2}(\mu )$ let us denote the space spanned by real functions that are
square integrable (more precisely equivalence classes) with respect to $\mu $
i.e. 
\begin{equation*}
L_{2}(\mu )=\{f:\mathbb{\mathbb{R}}\longrightarrow \mathbb{\mathbb{R}},\int
|f|^{2}d\mu <\infty \}.
\end{equation*}%
Our assumption on the existence of all moments of $X_{0}$ in terms of $%
L_{2}(\mu )$ implies that there exists a set of orthogonal polynomials that
constitute orthogonal base of this space. Let us denote these polynomials by 
$\{h_{n}\}_{n\geq -1}.$ Additionally let us assume that polynomials $h_{n}$
are orthonormal and $h_{-1}(x)=0,$ $h_{0}(x)=1.$

Notice also that if the support of measure $\mu $ is finite consisting of $v$
points then the space $L_{2}(\mu )$ is $v-$dimensional and there are $v$
orthogonal polynomials $h_{n},$ $n=0,\ldots ,v-1.$

Thus the class of Markov processes that we consider is a class of stochastic
processes that are TLD and MSC and moreover satisfying the following
conditions: $\forall t\in \mathbb{T},n\in N:E(X_{t}^{n})=m_{n}$ and $\forall
n\geq 1,s<t:$

\begin{equation}
E(X_{t}^{n}|\mathcal{F}_{\leq s})=Q_{n}(X_{s},t-s)\text{\ \ }\;\text{a.s.}\;,
\label{p_reg}
\end{equation}%
where $Q_{n}(x,t-s)$ is a polynomial of order not exceeding $n$ in $x\text{.}%
\;$

More precisely let us assume: 
\begin{equation*}
Q_{n}(x,t-s)=\sum_{j=0}^{n}\gamma _{n,j}(t-s)x^{j}.
\end{equation*}

We will call such processes stationary Markov processes with polynomial
regression (briefly SMPR process).

Let us notice that since we assume that the analysed process is MSC we have
for every $n\geq 1$ and $t>s:EX_{t}^{n}X_{s}^{n}\allowbreak =\allowbreak
EQ_{n}(X_{s},t-s)X_{s}^{n}.$ Hence if $Q_{n}$ would not be a polynomial of
order $n$ we would not have required continuity of $EX_{t}^{n}X_{s}^{n}$ (at
least for $t=s)$. Hence in the sequel we will assume that for the SMPR
process polynomials $Q_{n},n\geq 1$, defining conditional moments of order $%
n,$ are exactly of order $n$.

Finally let us underline that from now on all equalities between random
variables will be understand in 'almost sure sense'. However we will drop
abbreviations a.s. for the sake of brevity.

It has to be underlined that neither \cite{SzablPoly} nor this paper are the
first to consider families of orthogonal polynomials that are defined by
some Markov processes and use them to describe completely the marginal and
transitional distributions of the process. It seems that the first were Wim
Schoutens and Jozef L. Teugels who in \cite{SchTeu98} and \cite{Sch2000}
used families of orthogonal polynomials to analyse L{\'e}vy processes.
However it seems that the paper of Julie Lyng Forman and Michael S{\o }%
rensen \cite{ForSor08} contains ideas somewhat the closest to the ideas of
this paper. The point is that they consider diffusion processes having some
number (in many cases finite and depending on several fixed parameters ) of
polynomial conditional moments while we assume that all conditional moments
of the process in consideration are polynomial. Besides we do not assume
that the analysed processes are diffusion consequently having continuous
paths. The only important assumptions that we require are apart from
technical continuity ones, mentioned above, are the stationarity and the
fact that all conditional moments are polynomials of the condition. For
details see next section. Hence the results of \cite{ForSor08} and this
paper are close but neither paper is the generalization of the other.

The paper is organized as follows. The following Section \ref{main} contains
our main results. It consist of three subsections. The first one, Subsection %
\ref{gen} contains general properties of SMPR processes including
construction of orthogonal martingale polynomials, semigroup of transitional
operators and infinitesimal operators. In the second Subsection \ref{harnss}
we consider SMPR processes that additionally are assumed to be harnesses or
quadratic harnesses. We present simple necessary and sufficient conditions
for a SMPR processes to be harnesses and list all quadratic SMPR harnesses
since the list of them is very short, contains only three types of
processes. In Subsection \ref{IR} we analyze a subclass of SMPR processes
that posses independent regression property (generalization of independent
increments property) that is defined in this subsection. We indicate class
of possible marginal distributions and explain relationship of such
processes with L{\'e}vy processes.

Next Section \ref{open} contains some open problems that we were unable to
solve. Finally Section \ref{dow} contains longer proofs.

\section{Stationary processes with polynomial conditional moments\label{main}%
}

\subsection{General properties\label{gen}}

Since conditional expectation of every polynomial of order $n,$ $%
R_{n}(X_{t};\tau )$ (with respect to $\mathcal{F}_{\leq s})$ is a polynomial 
$\hat{R}_{n}(X_{s};\tau ,t-s)$ of the same order there is a natural question
if one can select a polynomial $p_{n}(x;t)$ in such a way that $%
E(p_{n}(X_{t};t)|\mathcal{F}_{\leq s})=p_{n}(X_{s};s)$ i.e. that $%
(p_{n}(X;t),\mathcal{F}_{\leq t})$ is a martingale. In \cite{SzablPoly} it
has been shown that one can always construct a sequence of martingales for a
given SMPR process. We will recall briefly this construction together with
some other notions that were presented there since they turned out to be
useful.

Hence following \cite{SzablPoly} we define for $t\geq s$ polynomials $Q_{n}:$
\begin{equation*}
E(X_{t}^{n}|\mathcal{F}_{\leq s})=Q_{n}(X_{s};t-s).
\end{equation*}%
Using coefficients of the these polynomials (denoted by $\gamma _{n,j}(t-s),$
$j=0,\ldots ,n\text{,}$ $n\geq 0)$ we construct sequence of lower triangular
matrices $\{A_{n}(t)\}_{n\geq 0},$ $t\in \mathbb{T}$ such that $A_{n}$ is a $%
(n+1)\times (n+1)$ matrix with $(0,0)$ entry equal to $1$ and $(i,j)-$th for
entry equal to $\gamma _{i,j}(t).$ Notice that by its construction matrix $%
A_{n}$ is a submatrix of any matrix $A_{k}$ for $k\geq n.$ Matrices $A_{n}$
turned out to be very useful when analyzing processes with polynomial
conditional moments. As pointed out above polynomials $Q_{n}$, $n\geq 1$ are
of exact order $n,$ consequently the matrices $A_{n},$ $n\geq 1$ are
nonsingular. Further by the "tower property" of the conditional expectation
we have: 
\begin{equation}
A_{n}(t-s)A_{n}(u-t)=A_{n}(u-s),  \label{_T}
\end{equation}%
for all $u>t>s\in \mathbb{T}\text{.}\;$ Let us define these matrices for $t<0
$ by the equality:%
\begin{equation*}
A_{n}(-t)=A_{n}(t)^{-1},
\end{equation*}%
for $n\geq 0,$ $t\geq 0.$ From equality (\ref{_T}) and the abovementioned
extended definition, we deduce that matrices do commute and that $%
A_{n}(t)A_{n}(-s)=A_{n}(t-s),$ for all $n\geq 0.$

Moreover we have $A_{n}(0)=I_{n}$ -identity matrix. Hence for every $n\geq 0$
matrices $\{A_{n}(t)\}_{t\in \mathbb{T}}$ constitute an abelian group.

Further following \cite{SzablPoly} these matrices constitute the so called
structural matrices of the process $X\text{.}\;$ Consequently polynomials
defined by 
\begin{equation*}
R_{n}(x,t)=A_{n}(-t)X^{(n)},
\end{equation*}%
where we denoted $(X^{(n)})^{\mathbb{T}}=(1,x,x^{2},\ldots ,x^{n})$
constitute family of polynomials that considered at $X_{t}$ are martingales.
Indeed we have: 
\begin{equation*}
E(R_{n}(X_{t};t)|\mathcal{F}_{\leq
s})=A_{n}(-t)A_{n}(t-s)X_{s}^{(n)}=A_{n}(-s)X_{s}^{(n)}=R_{n}(X_{s};s).
\end{equation*}%
We will add one more technical assumption in order to proceed further
without unnecessary complication.

Before, let us analyze immediate consequences to of above mentioned
properties of matrices $A_{n} (t)$.

\begin{lemma}
\label{diago}There exist a sequence of real nonnegative constants $\{\alpha
_{i}\}_{i\geq 1}$ in the case $\mathbb{T}=\mathbb{\mathbb{R}}$ and $\{\rho
_{i}\}_{i\geq 0}$ such that $|\rho _{i}|<1$ in the case $\mathbb{T}=\mathbb{%
\mathbb{\mathbb{Z}}}\text{,}$ such that $i-th$ diagonal element of the
matrix $A_{n}(t)$ is equal to $\exp (-\alpha _{i}t)$, $t\in \mathbb{\mathbb{R%
}}$ in the first case or $\rho _{i}^{n}$, $n\in \mathbb{\mathbb{\mathbb{Z}}}$
in the second. Moreover for each $i$ we have a vector $v_{i}=(v_{n,i},\ldots
,v_{i,i},0,\ldots ,0)^{T}$ with entries that do not depend on $t$ such that
for $s<t$ 
\begin{equation}
E(\exp (\alpha _{i}t)v_{i}^{T}X_{t}|\mathcal{F}_{\leq s})=\exp (\alpha
_{i}s)v_{i}^{T}X_{s},  \label{m1}
\end{equation}%
when $\mathbb{T}=\mathbb{\mathbb{R}}$ and for $\mathbb{T}=\mathbb{\mathbb{%
\mathbb{Z}}}$ 
\begin{equation}
E(\rho _{i}^{-n}v_{i}^{T}X_{n}|\mathcal{F}_{\leq m})=\rho
_{i}^{-m}v_{i}^{T}X_{m},  \label{m2}
\end{equation}%
for $m<n\text{.}$
\end{lemma}

\begin{proof}
The fact that on the diagonal of the matrix $A_{n}(t)$ must be the described
form, follows the fact in both cases the diagonal elements satisfy
multiplicative form of Cauchy equation in the first case with contiguous
time and with discrete time in the second. For the proof the rest of the
statement we take into account that a lower triangular matrices have their
eigenvalues on the diagonal and the fact that that commuting matrices share
each others eigenspaces. Hence since matrices $A_{n}(t)$ and $A_{n}(s)$ are
commuting there eigenspaces cannot depend on $t$ or $s\text{.}$ Now to
justify properties (\ref{m1}) and (\ref{m2}) we take $v_{i}$ to be the $i-$%
th eigenvector of matrices $A_{n}(t)$ related to eigenvalue $\exp (-\alpha
_{i}t)\text{.}$ We have: $E(\exp (\alpha _{i}t)v_{i}^{T}X_{t}|\mathcal{F}%
_{\leq s})=\exp (\alpha _{i}t)v_{i}^{T}A_{n}(t-s)X_{s}=\exp (\alpha
_{i}t)\exp (-\alpha _{i}(t-s))v_{i}^{T}X_{s}\text{.}$ Similar argument and
calculations are performed in the case of $\mathbb{T}=\mathbb{\mathbb{%
\mathbb{Z}}}\text{.}$
\end{proof}

Now we will add one technical condition that in the light of Lemma \ref%
{diago} will look very natural. Namely we will assume that $\forall n\geq 1$
the matrices $\{A_{n}(t)\}_{n\geq 0,t\in \mathbb{T}}$ are \emph{%
diagonalizable}.

For example symmetric matrices are diagonalizable, matrices with different
eigenvalues are diagonalizable. Moreover one can easily show that if a lower
triangular matrix has all entries below the diagonal not equal to zero than
it is \emph{diagonalizable }iff all its eigenvalues (in this case elements
of the diagonal $\{\exp ( -\alpha _{i} t)\}_{i \geq 0}$) are different.

In our case it means that matrices $\{A_{n}(t)\}_{t\in \mathbb{T}}$ are 
\emph{diagonalizable }iff\emph{\ }their diagonal elements are different. On
the other hand by Lemma \ref{diago} we deduce one can construct a family of
polynomial martingales $\{M_{i}(X_{t},t)=\exp (\alpha
_{i}t)v_{i}^{T}X_{t}\}_{i\geq 1}$of different orders. From the theory of
martingales it follows that $EM_{i}^{2}(X_{t},t)$ (the so called
angle-bracket of the martingale) must be an increasing function of $t\text{.}
$ In our case these functions are proportional to functions $\exp (2\alpha
_{i}t)$ in the case $\mathbb{T}=\mathbb{\mathbb{R}}$ and $\gamma _{i}^{-2n}$
in the case $\mathbb{T}=\mathbb{\mathbb{\mathbb{Z}}}$. Hence in
probabilistic terms \emph{diagonalizability }of $A_{n}(t)$ would mean that
so constructed polynomial martingales of different orders "grow" with
different "speed" which is a very natural condition.

As stated above \emph{diagonalizable} matrices must have the same
eigenspaces so consequently we must have $A_{n}(t)=V_{n}\mathbf{\Lambda }%
_{n}(t)V_{n}^{-1}$ for some matrix $V_{n}$ and the diagonal matrix $\Lambda
_{n}(t).$ It is not difficult to deduce that since matrix $A_{n}(t)$ is to
be lower triangular and matrix $\mathbf{\Lambda }$ is to be diagonal then
matrix $\mathbf{V}$ has to be also lower triangular. Moreover by (\ref{_T})
matrices $\mathbf{\Lambda }_{n}(t)$ satisfy $\mathbf{\Lambda }_{n}(t+s)=%
\mathbf{\Lambda }_{n}(t)\mathbf{\Lambda }_{n}(s)$ for all $t$ and $s$ which
leads (following properties of the Cauchy equation considered in the
multiplicative form for both continuous and discrete forms ) to the
conclusion that \newline
$\Lambda _{n}(t)=diag\{1,\exp (-\alpha _{1}t),\ldots ,\exp (-\alpha _{n}t)\}$
for some reals $\alpha _{i,}$ $i\geq 1$ $t\in \mathbb{\mathbb{R}}$ and $%
\Lambda _{n}(t)=diag\{1,\rho _{1}^{t},\ldots ,\rho _{n}^{t}\},$ for some $%
\rho _{i}\in \mathbb{\mathbb{R}},$ $i\geq 1,$ $t\in \mathbb{\mathbb{\mathbb{Z%
}}}$. For the sake of consistency of notation let us denote $a_{0}=0$ and $%
\rho _{0}=1.$

\begin{remark}
Notice that if the support of stationary measure consists of $v$ points then
there is no sense to consider $n>v-1$ consequently we have only $v-1$
numbers $\alpha _{j},$ $j=1,\ldots ,v-1$ in the continuous case and real
numbers $\rho _{i},$ that matter.
\end{remark}

\begin{remark}
\label{exp}\ Notice that for every $n\geq 1$ in the continuous time case
matrix $A_{n}(t)$ can also be presented in the following form: 
\begin{equation*}
A_{n}(t)=\exp (tW_{n}),
\end{equation*}%
where $W_{n}=V_{n}L_{n}V_{n}^{-1}$ with $L_{n}=diag\{0,-\alpha _{1},\ldots
,-\alpha _{n}\}$ and in the form: 
\begin{equation*}
A_{n}(t)=A_{1n}^{t},
\end{equation*}%
where $A_{1n}=V_{n}^{t}L_{n}V_{n}^{-1},$ and $L_{n}=diag\{1,\rho _{1},\ldots
,\rho _{n}\}$ in the discrete time case.
\end{remark}

\begin{proof}
Follows the fact that $\mathbf{\Lambda }_{n}(t)=\exp (tL_{n})=\sum_{j\geq 0}%
\frac{t^{j}}{j!}L_{n}^{j}.$ Hence $A_{n}(t)=V_{n}\mathbf{\Lambda }%
_{n}(t)V_{n}^{-1}=\sum_{j\geq 0}\frac{t^{j}}{j!}V_{n}L_{n}^{j}V_{n}^{-1}=%
\sum_{j\geq 0}\frac{t^{j}}{j!}W_{n}^{j}=\exp (tW_{n}).$ In the discrete case
we notice that $\Lambda _{n}(t)=L_{n}^{t}.$
\end{proof}

\begin{remark}
Notice that mentioned in the Remark \ref{exp} matrices $W_{n}$ are
'infinitesimal operators' of the strongly continuous subgroup of commuting
operators (in this case finite dimensional) $\{A_{n} (t)\}$ whose existence
is guaranteed by the Hille-Yoshida Theorem.
\end{remark}

Following \cite{SzablPoly} we deduce that sequence of polynomials $%
\{M_{n}(x,t)\}_{n\geq 0}$ defined by the relationship 
\begin{equation}
M_{n}(x,t)=\Lambda _{n}(-t)V_{n}^{-1}X^{(n)},  \label{omart}
\end{equation}%
constitute sequence of polynomial martingales. Indeed following \cite%
{SzablPoly}, we have: \newline
\begin{gather*}
E(M_{n}(X_{t},t)|\mathcal{F}_{\leq s})=\Lambda
_{n}(-t)V_{n}^{-1}E(X_{t}^{(n)}|\mathcal{F}_{\leq s})=\Lambda
_{n}(-t)V_{n}^{-1}A_{n}(t-s)X_{s}^{(n)}= \\
\Lambda _{n}(-t)V_{n}^{-1}V_{n}\Lambda
_{n}(t-s)V_{n}^{-1}X_{s}^{(n)}=\Lambda
_{n}(-s)V_{n}^{-1}X_{s}^{(n)}=M_{n}(X_{s},s).
\end{gather*}%
Now notice that operation $V_{n}^{-1}X^{(n)}$ defines in fact a sequence of
polynomials $\{p_{n}(x)\}_{n\geq 0}.$ Note also that one can chose
polynomials $\{p_{n}\}_{n\geq 1}$ to be monic. These polynomials together
with the sequence $\{\alpha _{n}\}$ define martingales 
\begin{equation}
M_{n}(X_{t},t)=\exp (\alpha _{n}t)p_{n}(X_{t}),  \label{_OM}
\end{equation}%
for the continuous case and 
\begin{equation}
M_{n}(X_{t},t)=p_{n}(X_{t})/\rho _{n}^{t}.  \label{OMD}
\end{equation}%
in the discrete case and generally characterize analyzed Markov process.

\begin{proposition}
\label{pos}For all $\nu >n\geq 1,$ $\alpha _{n}\geq 0$ in the continuous
time case and $\rho _{n}\in (-1,1)$ in the discrete time cases.
\end{proposition}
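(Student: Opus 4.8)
The plan is to exploit the martingale property of $M_n(X_t,t)$ together with stationarity, and the fact that $p_n(X_t)$ has nonzero $L_2(\mu)$-norm (so the variance is a fixed positive constant independent of $t$). In the continuous case, fix $n$ with $1\le n\le\nu-1$ and $s<t$. Since $(M_n(X_t,t),\mathcal F_{\le t})$ is a martingale, $E(M_n(X_t,t)\mid\mathcal F_{\le s})=M_n(X_s,s)$, i.e. $\exp(\alpha_n t)E(p_n(X_t)\mid X_s)=\exp(\alpha_n s)p_n(X_s)$, hence $E(p_n(X_t)\mid X_s)=\exp(-\alpha_n(t-s))p_n(X_s)$. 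Multiplying both sides by $p_n(X_s)$ and taking expectations, and using the tower property, gives $E(p_n(X_t)p_n(X_s))=\exp(-\alpha_n(t-s))E(p_n(X_s)^2)$. By stationarity $E(p_n(X_s)^2)=E(p_n(X_0)^2)=:c_n>0$ and this is independent of $s$; call the left-hand side $r_n(t-s)$. Then $r_n(\tau)=c_n\exp(-\alpha_n\tau)$ for all $\tau>0$.

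The key step is now the Cauchy–Schwarz (covariance) inequality: $|r_n(\tau)|=|E(p_n(X_t)p_n(X_s))|\le \sqrt{E(p_n(X_t)^2)}\sqrt{E(p_n(X_s)^2)}=c_n$, using stationarity again on each factor. Hence $c_n\exp(-\alpha_n\tau)\le c_n$ for all $\tau>0$, and since $c_n>0$ this forces $\exp(-\alpha_n\tau)\le 1$, i.e. $\alpha_n\ge 0$. (One should note that $p_n$ is a genuine degree-$n$ polynomial, so $p_n(X_0)$ is not a.s.\ constant — this uses the standing assumption that $\operatorname{supp}X_0$ has at least $\nu$ points, or in the finite-support case that $n\le\nu-1$ — hence $c_n>0$ and the division is legitimate.) In the discrete case the identical computation with the martingale $M_n(X_t,t)=p_n(X_t)/\rho_n^t$ gives $E(p_n(X_t)p_n(X_s))=\rho_n^{\,t-s}c_n$ for integer $t>s$, and Cauchy–Schwarz yields $|\rho_n|^{\,\tau}\le 1$ for all integers $\tau\ge 1$; taking $\tau=1$ gives $|\rho_n|\le 1$. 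To upgrade this to the strict inequality $\rho_n\in(-1,1)$ claimed in the statement, I would argue that $|\rho_n|=1$ is incompatible with the TLD/MSC hypotheses: if $\rho_n=\pm1$ then $E(p_n(X_t)p_n(X_s))=\pm c_n$, so equality holds in Cauchy–Schwarz, forcing $p_n(X_t)=\pm p_n(X_s)$ a.s., which propagates a deterministic relation among the $X_r$ and contradicts total linear independence of the covariance matrices (for $n=1$ directly; for general $n$ via the nonsingularity of the $\mathbf A_n$'s and the fact that $p_n$ has degree exactly $n$).

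The main obstacle I anticipate is precisely this last point — ruling out the boundary values $\alpha_n=0$ in the continuous case is \emph{not} required (the proposition allows $\alpha_n\ge0$), but in the discrete case the strict inequality $|\rho_n|<1$ must be extracted, and the cleanest route is the equality-in-Cauchy–Schwarz analysis combined with a careful use of the TLD assumption; some care is needed because $p_n$ is a polynomial of degree $n$, so one must translate a deterministic relation $p_n(X_t)=\pm p_n(X_s)$ back into a statement contradicting nonsingularity of a covariance or structural matrix. An alternative, possibly smoother, route for the discrete case is to observe that $r_n(k)=\rho_n^{\,k}c_n\to$ a limit that, by mean-square continuity and the mixing-type behavior forced by the Markov/TLD structure, must tend to $0$ (equivalently, $p_n(X_k)$ cannot stay perfectly correlated), which again yields $|\rho_n|<1$; I would present whichever of these is shortest given the machinery already assembled in Section~\ref{gen}.
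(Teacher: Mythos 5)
Your argument is essentially the paper's own proof: the paper simply observes that $\hat m_n(t)=EM_n^2(X_t,t)=\exp(2\alpha_n t)\,Ep_n^2(X_t)$ must be nondecreasing in $t$ (general property of square-integrable martingales) while $Ep_n^2(X_t)$ is constant by stationarity, which is the same inequality $\Vert E(p_n(X_t)\mid\mathcal F_{\le s})\Vert_2\le\Vert p_n(X_t)\Vert_2$ that you extract via Cauchy--Schwarz on the covariance. You are in fact more careful than the paper on one point: the paper's two-line proof only yields $|\rho_n|\le 1$ in the discrete case and says nothing about excluding the boundary, whereas your equality-in-Cauchy--Schwarz/TLD discussion at least identifies and sketches how to close that gap.
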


\begin{proof}
Now from the general theory of martingales it follows that functions $\hat{m}%
_{n}(t)=EM_{n}^{2}(X_{t},t)$ must be a nondecreasing function of $t.$ On the
other hand from stationarity we deduce that $\forall t\in \mathbb{T}$ : $%
Ep_{n}^{2}(X_{t})$ does not depend on $t.$
\end{proof}

\begin{remark}
Notice that if the property of polynomial regression applies only to finite
(say equal to $N)$ first moments of $X_{0}$ then the above presented method
of analysis remains unchanged (due to the fact that matrix $A_{k}(t)$ is a
submatrix of the matrix $A_{n}(t),$ if $N\geq n>k$). Hence we deduce that in
this case there exist $N$ polynomial martingales all having structure as in (%
\ref{_OM}) or (\ref{OMD}).
\end{remark}

From now on we will concentrate more on the continuous parameter case
sporadically pointing out differences with discrete case.

We will write SMPR($\{\alpha _{n},p_{n}\})$ to denote SMPR process with
polynomials $\{p_{n}\}$ and numbers $\{\alpha _{n}\}.$ The numbers $\{\alpha
_{n}\}$ will be called \emph{correlation indices }of a given SMPR.

This representation is unique iff we fix sequence of orthogonal polynomials $%
\{p_{n}\}$ i.e. assuming that either they are orthonormal or are monic.

Note that if support of the stationary measure is finite and consists of $v$
points then the set $\{\alpha _{n} ,p_{n}\}$ characterizing SMPR would be
finite consisting of $v$ points for $n =0 ,\ldots ,v -1.$

\begin{remark}
Any linear combination of martingales $\sum_{j=0}^{n}\beta
_{n,j}M_{j}(X_{t},t),$ $n\geq 1$ with independent on $t$ parameters $\{\beta
_{n,j}\}$ is also a polynomial martingale. However there is only one family
of martingales of the form (\ref{_OM})
\end{remark}

The following proposition lists some of the properties of these martingales
and constants.

\begin{proposition}
\label{easy}i) $\forall n \geq 1 :$ number $\alpha _{n}$ is positive,

ii) $E(\exp (-\alpha _{n}s)p_{n}(X_{s}))|\mathcal{F}_{\geq t})=\exp (-\alpha
_{n}t)p_{n}(X_{t}),$

ii) if $\alpha _{n}\neq \alpha _{m}:EM_{n}(X_{t},t)M_{m}(X_{t},t)=0,$
\end{proposition}

\begin{proof}
i) From the general theory of martingales it follows that $%
EM_{n}^{2}(X_{t},t)=\exp (2\alpha _{n}t)Ep_{n}^{2}(X_{t})$ is an increasing
function of $t$. ii) Follows symmetry in time of the considered process.
iii) Keeping in mind that for $t>s:E(p_{n}(X_{t})|\mathcal{F}_{\leq s})=\exp
(\alpha _{n}(t-s))p_{n}(X_{s})$ and $E(p_{n}(X_{s})|\mathcal{F}_{\geq
t})=\exp (-\alpha _{n}(t-s))p_{n}(X_{t})$, let us calculate $%
Ep_{n}(X_{t})p_{m}(X_{s})$ in two ways. On one hand we have $\exp (\alpha
_{n}(t-s))E(p_{n}(X_{s})p_{m}(X_{s}))$ and on the other $\exp (\alpha
_{m}(t-s))E(p_{n}(X_{t})p_{m}(X_{t})).$ However since we deal with a
stationary process $E(p_{n}(X_{s})p_{m}(X_{s}))=E(p_{n}(X_{t})p_{m}(X_{t})).$
\end{proof}

\begin{definition}
SMPR process such that polynomials $p_{n}$ are orthogonal with respect to
the stationary measure will be called \emph{regular} briefly RSMPR.
\end{definition}

From Proposition \ref{easy} follows the following corollary.

\begin{corollary}
The SMPR($\{\alpha _{n},p_{n}\})$ with correlation indices $\{\alpha _{n}\}$
all different is RSMPR. If the support of the stationary measure of the
considered SMPR is finite consisting of $v$ points then only $\alpha _{j},$ $%
j=1,\ldots ,v-1$ have to be different in order to ensure that the process is
RSMPR.
\end{corollary}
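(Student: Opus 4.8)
The plan is to derive this corollary directly from part (ii) of Proposition \ref{easy}. Recall that Proposition \ref{easy}(ii) states that if $\alpha_n \neq \alpha_m$ then $Ep_n(X_t)p_m(X_t) = 0$, i.e. $p_n$ and $p_m$ are orthogonal in $L_2(\mu)$. So if \emph{all} the correlation indices $\{\alpha_n\}_{n\geq 0}$ are pairwise distinct (recalling $\alpha_0 = 0$ by convention), then for every pair $n \neq m$ we have $Ep_n(X_t)p_m(X_t) = 0$, which is precisely the statement that the family $\{p_n\}$ is orthogonal with respect to the stationary measure $\mu$. By the Definition preceding the corollary, this means the process is RSMPR.

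The one thing worth checking is that the pairwise-distinctness hypothesis is exactly what makes Proposition \ref{easy}(ii) applicable to \emph{every} pair, and that the polynomials $p_n$ are genuinely of degree $n$ (hence linearly independent and a complete orthogonal system once orthogonality is established) — but this is immediate from their construction as $\mathbf{V}_n^{-1}\mathbf{X}^{(n)}$ with $\mathbf{V}_n$ lower triangular and nonsingular, and from the remark that one may take them monic of degree $n$. So the degrees are correct automatically; the content of the corollary is purely the orthogonality, which comes for free from distinct eigenvalues.

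For the finite-support case: if $\operatorname{supp}\mu$ consists of $v$ points, then $L_2(\mu)$ is $v$-dimensional and there are only $v$ polynomials $p_0, p_1, \ldots, p_{v-1}$ (equivalently only $v-1$ nontrivial correlation indices $\alpha_1, \ldots, \alpha_{v-1}$, since $\alpha_0 = 0$), so requiring just $\alpha_1, \ldots, \alpha_{v-1}$ to be distinct suffices to invoke Proposition \ref{easy}(ii) on each of the finitely many pairs. I do not anticipate any real obstacle here: the corollary is essentially a one-line consequence of Proposition \ref{easy}(ii) together with the definition of RSMPR, and the only mild care needed is bookkeeping the index conventions ($\alpha_0 = 0$, $p_0 \equiv 1$) and the truncation in the finite-support case.
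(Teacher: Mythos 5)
Your proof is correct and follows exactly the route the paper intends: the paper introduces the corollary with ``From Proposition \ref{easy} follows the following corollary'' and supplies no further argument, so the content is precisely your observation that part (ii) of Proposition \ref{easy} gives pairwise orthogonality of the $p_n$ whenever the corresponding indices differ, which is the definition of RSMPR. Your additional bookkeeping about the degrees of the $p_n$ and the truncation to $v$ polynomials in the finite-support case is consistent with the paper's setup and does not change the argument.
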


Notice that for the RSMPR we can identify polynomials $p_{n}/\sqrt{\hat{p}%
_{n}}$ , where we denoted $\hat{p}_{n} =E p_{n}^{2} (X_{0})$ and $h_{n}$
(constituting the base of the space $L_{2} (\mu ))$ since both families were
chosen to be orthonormal with respect to the stationary measure $\mu $.
Having polynomials $\{h_{n}\}$ and nonnegative numbers $\{\alpha _{n}\}$ let
us define:

i) operators $U^{t}$ defined on $L_{2} (\mu )$ with values also in $L_{2}
(\mu )$ by the formula $U^{0} =I$ and for $t \geq 0 :$ 
\begin{equation}
L_{2} (\mu ) \ni f = \sum _{n \geq 0}c_{n} h_{n} \longrightarrow U^{t} f =
\sum _{n \geq 0}\exp ( -\alpha _{n} t) c_{n} h_{n} ,  \label{trans}
\end{equation}

\begin{remark}
Notice that operators $U^{t},$ $t\geq 0$ constitute a strongly continuous
semigroup. This is since we obviously have $U^{t}U^{s}=U^{t+s}$ and we have $%
\Vert U^{t}f-f\Vert ^{2}=\sum_{n\geq 1}c_{n}^{2}(1-\exp (-\alpha _{n}t))^{2}%
\underset{t\longrightarrow 0}{\longrightarrow }0.$ If additionally numbers $%
\underset{n\geq 1}{\min }\alpha _{n}>0$ then we even have $\Vert
U^{t}f-f\Vert ^{2}\leq \underset{j\geq 1}{\max }(1-\exp (-\alpha
_{j}t))^{2}\Vert f\Vert ^{2}$, so $\Vert U^{t}-I\Vert \underset{%
t\longrightarrow 0}{\longrightarrow }0$.
\end{remark}

ii) a subset of $L_{2}(\mu )$ defined by: 
\begin{equation*}
D_{A}=\{f:\mathbb{\mathbb{R}}\longrightarrow \mathbb{\mathbb{R}}%
,f=\sum_{n\geq 0}c_{n}h_{n},\sum_{n\geq 0}c_{n}^{2}\alpha _{n}^{2}<\infty
\}\in L_{2}(\mu ),
\end{equation*}

iii) and operator $A$ acting on $D_{A}$ defined by the formula: 
\begin{equation}
D_{A}\ni f=\sum_{n\geq 0}c_{n}h_{n}\longrightarrow Af=-\sum_{n\geq 0}\alpha
_{n}c_{n}h_{n}.  \label{inf}
\end{equation}%
Let us immediately remark that family $\{U^{t}\}_{t\geq 0}$ constitutes (by
its definition) a semigroup of operators on $L_{2}.$ Moreover if numbers $%
\{\alpha _{n}\}$ are such that $\sum_{n\geq 0}\exp (-2\alpha _{n}t)<\infty $
for $t>0$ then operator $U^{t}$ is a Hilbert-Schmidt operator.

We summarize the above mentioned considerations and the results of \cite%
{SzablPoly} adapted to our assumptions in the following theorem.

\begin{theorem}
\label{semigroup}For every RSMPR process $\mathbf{X}$ one can define a
family of polynomials $\{h_{n}\}_{n\geq 1}$ orthonormal with respect to the
marginal, stationary measure and a sequence of positive constants $\{\alpha
_{n}\}_{n\geq 1}$ such that the sequence $(M_{n}(X_{t},t),\mathcal{F}_{\leq
t})_{t\in \mathbb{\mathbb{R}}}$ $n=1,2,3,\ldots $ defined by 
\begin{equation}
M_{n}(X_{t},t)=\exp (\alpha _{n}t)h_{n}(X_{t}),\;\text{{}}\;\;n\geq 1,
\label{MO}
\end{equation}%
constitutes a family of orthogonal martingales.

Family $\{U^{t}\}_{t\geq 0}$ of operators defined by (\ref{trans})
constitutes a strongly continuous semigroup of transition operators of $%
\mathbf{X}$ , i.e. \newline
$\forall f\in L_{2}(\mu ):$ $(U^{t}f)(y)=E(f(X_{t})|X_{0}=y).$

Moreover operator $A$ defined by (\ref{inf}) is the infinitesimal operator
of the semigroup $\{U^{t}\}_{t\geq 0}$ and $D_{A}$ is its domain.
Consequently RSMPR processes are completely characterized by polynomials $%
\{h_{n}\}$ and positive reals $\{\alpha _{n}\}.$

If additionally $\eta <<\mu $ and $\int ((\frac{d\eta }{d\mu }))^{2}d\mu
<\infty ,$ where as above $\mu (dx)$ and $\eta (dx|y,t)$ denote respectively
marginal and transitional measures of $X,$ then 
\begin{equation}
\frac{d\eta }{d\mu }(x|y,t)=\sum_{n\geq 0}\exp (-\alpha
_{n}t)h_{n}(x)h_{n}(y).  \label{gest}
\end{equation}
\end{theorem}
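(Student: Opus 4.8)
The plan is to break Theorem~\ref{semigroup} into its constituent claims and verify each in turn, mostly by unwinding the construction recalled earlier in the section. First I would handle the identification of the orthogonal martingales $O_n$. For an RSMPR process the polynomials $p_n$ are by hypothesis orthogonal with respect to $\mu$, so after normalization $p_n/\sqrt{\hat p_n}$ coincides with $h_n$ (both families being monic-up-to-scaling orthonormal bases of $L_2(\mu)$, hence equal by uniqueness of orthonormalization). Substituting this identification into the martingale relation \eqref{_OM} for $M_n$ gives that $O_n(X_t,t)=\exp(\alpha_n t)h_n(X_t)$ is, up to the constant factor $\sqrt{\hat p_n}$, the martingale $M_n(X_t,t)$; orthogonality of the $O_n$ at a fixed time is immediate from orthonormality of the $h_n$ with respect to $\mu$, and Proposition~\ref{easy}(ii) covers the cross terms. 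Positivity of the $\alpha_n$ is exactly Proposition~\ref{pos}.

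Next I would verify that $\{U^t\}_{t\ge 0}$ defined by \eqref{trans} is the transition semigroup, i.e. that $(U^tf)(y)=E(f(X_t)\mid X_0=y)$ for all $f\in L_2(\mu)$. The semigroup property $U^tU^s=U^{t+s}$ and strong continuity are already noted in the Remark following \eqref{trans}, so the real content is the probabilistic identification. I would first check it on the basis functions: from the martingale property \eqref{_OM}, combined with $h_n=p_n/\sqrt{\hat p_n}$, one gets $E(h_n(X_t)\mid \mathcal F_{\le 0})=\exp(-\alpha_n t)h_n(X_0)$, hence by the Markov property $E(h_n(X_t)\mid X_0=y)=\exp(-\alpha_n t)h_n(y)=(U^th_n)(y)$. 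Then extend to general $f=\sum_n c_nh_n\in L_2(\mu)$ by linearity and an $L_2(\mu)$-continuity argument: the conditional-expectation operator $f\mapsto E(f(X_t)\mid X_0=\cdot)$ is a contraction on $L_2(\mu)$ (by stationarity and Jensen), it agrees with the bounded operator $U^t$ on the dense span of the $h_n$, so the two coincide everywhere. For the infinitesimal generator, one computes $\lim_{t\to 0^+}\frac1t(U^tf-f)$ componentwise: for $f\in D_A$ the $n$-th coefficient tends to $-\alpha_nc_n$, and the hypothesis $\sum_n c_n^2\alpha_n^2<\infty$ together with dominated convergence for series gives convergence in $L_2(\mu)$; a standard argument (using that $D_A$ contains all finite linear combinations, which is a core) identifies $D_A$ as the full domain and $A$ as the generator.

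Finally, for the density formula \eqref{gest}, assume $\eta(\cdot\mid y,t)\ll\mu$ with square-integrable density $g_{y,t}:=d\eta/d\mu(\cdot\mid y,t)\in L_2(\mu)$. Expand $g_{y,t}=\sum_n a_n(y,t)h_n$ in the orthonormal basis; then $a_n(y,t)=\int h_n(x)g_{y,t}(x)\mu(dx)=\int h_n(x)\eta(dx\mid y,t)=E(h_n(X_t)\mid X_0=y)=\exp(-\alpha_n t)h_n(y)$ by the transition-operator identification just established. Substituting back yields $d\eta/d\mu(x\mid y,t)=\sum_{n\ge 0}\exp(-\alpha_n t)h_n(x)h_n(y)$, with convergence in $L_2(\mu)$ in the $x$-variable for each fixed $y$, $t$. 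I expect the main obstacle to be the rigorous passage from the basis functions to all of $L_2(\mu)$ in the identification $(U^tf)(y)=E(f(X_t)\mid X_0=y)$ and, relatedly, the careful handling of the generator's domain: one has to be sure that pointwise-in-$n$ convergence of Fourier coefficients upgrades to $L_2(\mu)$-convergence and that no domain issues are swept under the rug (this is where the moment-identifiability and TLD assumptions, guaranteeing the $h_n$ are a genuine basis, are doing the work). The density claim is then essentially a corollary once that identification is in hand.
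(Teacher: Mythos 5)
Your proposal is correct and its first half (identifying $h_n$ with the normalized $p_n$ via Proposition~\ref{easy}, positivity of $\alpha_n$ via Proposition~\ref{pos}, and deferring the semigroup/strong-continuity claims to the remark after \eqref{trans}) matches the paper. Where you genuinely diverge is in the last two components. For the generator, the paper does not differentiate $U^t$ at $t=0$; it computes the resolvent $R^{\lambda}f=\int_0^{\infty}e^{-\lambda t}U^tf\,dt=\sum_n c_nh_n/(\lambda+\alpha_n)$, checks the bound $\left\Vert R^{\lambda}f\right\Vert\le\lambda^{-1}\left\Vert f\right\Vert$, and invokes the Hille--Yosida theorem; that route delivers for free that $D_A$ is exactly the domain, since the domain is the range of the resolvent. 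Your direct computation of $\lim_{t\to 0^+}t^{-1}(U^tf-f)$ proves that the generator extends $A$ on $D_A$, but your appeal to "$D_A$ contains a core" does not by itself rule out the domain being strictly larger: a core is only dense in graph norm, not equal to the domain. The gap is easily closed (if the limit exists in $L_2(\mu)$ its $n$-th Fourier coefficient must be $-\alpha_nc_n$, whence $\sum_nc_n^2\alpha_n^2<\infty$ by Fatou, so $f\in D_A$), but as written that step is the one soft spot. For the density formula \eqref{gest} your argument is actually more self-contained than the paper's: the paper merely cites formula (3.6) of an external result (an incomplete reference, evidently to the author's paper on Lancaster-type expansions), whereas your direct expansion of $d\eta/d\mu(\cdot\mid y,t)$ in the orthonormal basis, with coefficients computed from $E(h_n(X_t)\mid X_0=y)=e^{-\alpha_nt}h_n(y)$, is a complete proof of the same identity. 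You also make explicit the $L_2$-contraction argument identifying $U^t$ with the conditional-expectation operator on all of $L_2(\mu)$, which the paper leaves implicit.
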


\begin{proof}
As it follows from Proposition \ref{easy} polynomials $\{p_{n}\}$ defined by
(\ref{omart}) must be orthogonal, hence one can select them in such a way
that they are additionally normalized. The fact that operators $(U^{t},$ $%
t\geq 0)$ constitute strongly continuous semigroup was show above. Further
we observe that the set $D_{A}$ contains functions $f$ that have finite
expansions in a Fourier series in polynomials $\{h_{n}\}$ and such functions
form a dense subset of $L_{2}(\mu ).$ Next resolvent $R^{\lambda }$ operator
of the semigroup of operators $U^{t}$ is given by the formula 
\begin{equation*}
(R^{\lambda }f)=\int_{0}^{\infty }\exp (-\lambda t)(U^{t}f)(t)dt=\sum_{n\geq
0}c_{n}h_{n}/(\lambda +\alpha _{n}),
\end{equation*}%
if $f=\sum_{n\geq 0}c_{n}h_{n}.$ This is so since for $f=\sum_{j\geq
0}c_{j}h_{j}$ denoting $f_{n}=\sum_{j\geq 0}^{n}c_{j}h_{j}$ and $R^{\lambda
}f=\sum_{n\geq 0}c_{n}h_{n}/(\lambda +\alpha _{n})$ we have $\Vert
R^{\lambda }f_{n}-R^{\lambda }f\Vert \leq \frac{1}{\lambda }\Vert
f_{n}-f\Vert .$ Besides we have also $\Vert R^{\lambda }f\Vert \leq \frac{1}{%
\lambda }\Vert f\Vert $ so $\Vert (\lambda I-A)^{-1}\Vert \leq 1/\lambda .$
Hence all assumptions of the Hille--Yoshida theorem (compare \cite{wencel})
are fulfilled and we deduce that operator $A$ is an infinitesimal operator
of the semigroup $\{U^{t}\}_{t\geq 0}$. Since infinitesimal operator defines
all the finite dimensional distribution of a Markov process and operator $A$
is defined completely by polynomials $h_{n}$ and numbers $\alpha _{n}$ we
deduce that they characterize RSMPR process.

When $\eta <<\mu $ and $\int ((\frac{d\eta }{d\mu }))^{2}d\mu <\infty $ we
use Theorem 2 of (more precisely formula (3.6)).
\end{proof}

As a corollary we get the result.

\begin{corollary}
If $supp\mu $ is bounded, and if $\forall t \geq 0 :$ 
\begin{equation*}
\sum _{n \geq 0}\exp ( -\alpha _{n} t) \underset{x \in supp\mu }{\sup }\vert
h_{n} (x)\vert <\infty
\end{equation*}%
then the family of transition probabilities is Feller, consequently process $%
\mathbf{X}$ has strong Markov property.
\end{corollary}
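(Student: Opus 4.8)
The plan is to verify the Feller property of the semigroup $\{U^t\}_{t\geq 0}$ under the stated summability hypothesis, and then invoke the standard fact that a Feller transition semigroup on a locally compact space yields a process with the strong Markov property. Recall that $\limfunc{supp}\mu$ is compact, so the natural state space is $K\allowbreak =\allowbreak \limfunc{supp}\mu$ and the relevant function space is $C(K)$, the continuous functions on $K$ with the sup norm. What must be shown is that for $f\in C(K)$ and $t>0$ the function $y\mapsto (U^tf)(y)\allowbreak =\allowbreak E(f(X_t)|X_0=y)$ again belongs to $C(K)$, together with (depending on which variant of ``Feller'' one wants) the strong continuity $\|U^tf-f\|_\infty\to 0$ as $t\to 0^+$.

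First I would expand $f$ in the orthonormal polynomial basis: since $f\in C(K)\subset L_2(\mu)$, write $f\allowbreak =\allowbreak \sum_{n\geq 0}c_nh_n$ with $c_n\allowbreak =\allowbreak \int f h_n\, d\mu$, and note $|c_n|\allowbreak \leq\allowbreak \|f\|_\infty$ because $\int|h_n|\,d\mu\leq(\int h_n^2\,d\mu)^{1/2}\allowbreak =\allowbreak 1$ by Cauchy--Schwarz and $\mu$ a probability measure. By Theorem~\ref{semigroup} we have $(U^tf)(y)\allowbreak =\allowbreak \sum_{n\geq 0}\exp(-\alpha_n t)c_nh_n(y)$ in $L_2(\mu)$. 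The hypothesis $\sum_{n\geq 0}\exp(-\alpha_n t)\sup_{x\in K}|h_n(x)|<\infty$ then gives, for each fixed $t>0$, the uniform bound $\sum_{n\geq 0}\exp(-\alpha_n t)|c_n|\,|h_n(y)|\leq\|f\|_\infty\sum_{n\geq 0}\exp(-\alpha_n t)\sup_{x\in K}|h_n(x)|<\infty$, so the series defining $U^tf$ converges \emph{absolutely and uniformly} on $K$; each partial sum is a polynomial, hence continuous, so the uniform limit $U^tf$ lies in $C(K)$. This also shows $U^t$ maps $C(K)$ into $C(K)$ and is a contraction there (it is contractive on $L_2(\mu)$ and, by the above, bounded on $C(K)$; positivity and $U^t1\allowbreak =\allowbreak 1$ are clear from the spectral definition since $\alpha_0\allowbreak =\allowbreak 0$).

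Next I would address strong continuity at $0$: for $f\allowbreak =\allowbreak \sum c_nh_n\in C(K)$ one has $\|U^tf-f\|_\infty\leq\sum_{n\geq 1}(1-\exp(-\alpha_n t))|c_n|\,\sup_{x\in K}|h_n(x)|$. Using the elementary inequality $1-e^{-\alpha_n t}\leq\min(1,\alpha_n t)$ and the summability hypothesis at, say, $t\allowbreak =\allowbreak 1$ to dominate the tail, one splits the sum into finitely many leading terms (each tending to $0$ as $t\to 0$) and a uniformly small tail; this yields $\|U^tf-f\|_\infty\to 0$. Then $(U^t)_{t\geq 0}$ is a Feller semigroup on the compact space $K$, and by the classical theory (e.g.\ the same references as for Hille--Yoshida, such as \cite{wencel}) every Feller process admits a modification with c\`adl\`ag paths possessing the strong Markov property; since $\mathbf{X}$ is the Markov process whose transition semigroup is exactly $\{U^t\}$ by Theorem~\ref{semigroup}, the conclusion follows. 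The main obstacle is purely bookkeeping rather than conceptual: one must make sure the uniform convergence of the polynomial series is genuinely delivered by the stated hypothesis (it is, via the $|c_n|\leq\|f\|_\infty$ bound), and one must be slightly careful about \emph{which} notion of Feller property is being invoked --- here the strong form ($C(K)\to C(K)$ plus strong continuity), which is what the hypothesis is tailored to give --- so that the cited strong Markov theorem applies verbatim.
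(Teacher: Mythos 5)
Your proof follows the same route as the paper's (whose entire argument is the single sentence ``We use Weierstrass criterion for uniform convergence''), and the central step is correct and is exactly what that sentence means: from $|c_n|=|\int f h_n\,d\mu|\leq \|f\|_\infty$ and the hypothesis, the series $\sum_{n\geq 0}e^{-\alpha_n t}c_nh_n(y)$ is dominated by the convergent numerical series $\|f\|_\infty\sum_n e^{-\alpha_n t}\sup_{K}|h_n|$, hence converges uniformly on $K=\limfunc{supp}\mu$ and $U^tf\in C(K)$. You supply more detail than the paper (identification of the $L_2$-sum with the uniform sum, contractivity, the appeal to the Feller--strong-Markov theorem), which is all to the good. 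The one weak spot is your strong-continuity estimate $\|U^tf-f\|_\infty\leq\sum_{n\geq 1}(1-e^{-\alpha_n t})|c_n|\sup_K|h_n|$: this presupposes that the orthogonal expansion $\sum c_nh_n$ converges to $f$ pointwise (indeed uniformly) on $K$, which is not guaranteed for a general $f\in C(K)$ — the expansion is only an $L_2(\mu)$ identity — and, separately, your proposed domination of the tail by the $t=1$ summability does not go through, because the tail terms $(1-e^{-\alpha_n t})|c_n|\sup_K|h_n|$ carry no exponential damping factor $e^{-\alpha_n}$. The standard repair is a density argument: $U^t$ is a positive contraction on $C(K)$ with $U^t1=1$ (positivity coming from $U^tf(y)=E(f(X_t)|X_0=y)$, not from the spectral formula), strong continuity is immediate on finite linear combinations of the $h_n$, and these are dense in $C(K)$ by Stone--Weierstrass since $K$ is compact. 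With that substitution your argument is complete; note also that for the strong Markov conclusion one still needs right-continuity of paths (or a c\`adl\`ag modification), which is part of the classical Feller-process package you cite rather than something the displayed hypothesis delivers directly.
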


\begin{proof}
We use Weierstrass criterion for uniform convergence.
\end{proof}

\begin{remark}
If $\{h_{n}\}$ are the so called Appell polynomials i.e. polynomials
satisfying $h_{n}^{^{\prime }}=nh_{n-1},$ (like e.g. Hermite polynomials)
and numbers $\alpha _{n}=n\alpha $ for some $\alpha >0$ then infinitesimal
operator $A$ is a differential operator.
\end{remark}

\begin{remark}
Notice that the expansion (\ref{gest}) presented in the equivalent form 
\begin{equation*}
d \mu (x) d \mu (y) \sum _{n \geq 0}\exp ( -\alpha _{n} t) h_{n} (x) h_{n}
(y)
\end{equation*}%
is in fact a Lancaster's type expansion of the two dimensional distribution $%
(X_{\tau } ,X_{t +\tau })$ as described in \cite{Lancaster58}, \cite%
{Lancaster63(1)}, \cite{Lancaster63(2)}.
\end{remark}

\begin{remark}
Recently two important papers \cite{Bakry03} and \cite{cuchiero12} appeared.
In those papers the so called polynomial processes are examined. In the
second one the polynomial process is exactly the considered in \cite%
{SzablPoly} processes with polynomial regression. The difference between
those two papers lies in the fact that \cite{SzablPoly} we consider and
exploit polynomial martingales that naturally appear, while in \cite%
{cuchiero12} the other martingales are constructed, not necessarily
polynomial. They are used to analyze certain stochastic differential
equations that appear in financial application. Generally in \cite%
{cuchiero12} only time homogenous Markov processes are analyzed and the
stochastic analysis approach is exploited.

The paper \cite{Bakry03} is closer to the ideas exploited in \cite{SzablPoly}
and in the present paper. Namely in \cite{Bakry03} one starts with Markov
processes whose transition operator has polynomial eigenfunctions and is
given by right hand side of formula (\ref{gest}) with $\exp (-\alpha _{n}t)$
replaced by $c_{n}$ $|c_{n}|\leq 1.$ The authors study conditions for the
sequences $\{c_{n}\}\text{,}$ so that $K(x,dy)=(\sum_{j\geq
0}c_{j}P_{j}(x)P_{j}(x))\mu (dy)$ defines transition operator, where $%
\{P_{j}\}$ are polynomials orthogonal with respect to probability measure $%
\mu .$ From this point of view this paper provides probabilistic model for
the cases considered in \cite{Bakry03}.

It provides also important information on the question of existence of
stationary process with polynomial regression. Namely it provides an answer
to the question when operators $U^{t}$ are positive or another words are
there any restriction on possible $(\alpha _{n},p_{n})$ that characterize
RSMP. In \cite{Bakry03} there is condition given for this namely $%
\sum_{n\geq 0}\exp (-\alpha _{n}t)<\infty .$ Hence paper \cite{Bakry03}
provides important extension on the results of this paper. The questions
considered there concentrate around positivity of so defined operators and
are different from those examined in this paper, where we try to
characterize certain subclasses of the considered class of processes using
available and natural information that characterize RSMP i.e. the sequence $%
(\alpha _{n},p_{n}).$
\end{remark}

\subsection{Harnesses\label{harnss}}

Introduced by Hammersley in \cite{Ham67} harnesses were studied in recent
years by Yor in \cite{Yor05} and Bryc et al. in \cite{BryMaWe07} and the
later papers. We will examine in this subsection which of RSMPR processes
are harnesses. Let us now recall definition of harnesses that was presented
in \cite{SzablPoly}. It is slightly modified original definition that
appeared in \cite{BryMaWe07}.

\begin{definition}
A Markov process $X=(X_{t})_{t\in \mathbb{T}}$ such that $\forall t\in 
\mathbb{T}:E|X_{t}|^{r}<\infty ,$ $r\in N$ is said to be $r-$harness if $%
\forall s<t<u:$ $E(X_{t}^{r}|\mathcal{F}_{s,u})$ is a polynomial of degree $r
$ in $X_{s}$ and $X_{u}.$
\end{definition}

\begin{definition}
$1 -$harness will be called simply harness while the process that is both $r
-$harness for $r =1 ,2$ will be called quadratic harness.
\end{definition}

\begin{remark}
Notice that for a Markov process $\mathbf{X}$ to be a harness is equivalent
that $\forall s,u\geq 0:$ 
\begin{equation}
E(r_{1}(X_{t};t)|\mathcal{F}_{s,u})=a_{L}r_{1}(X_{s};s)+a_{R}r_{1}(X_{u};u))
\label{1h}
\end{equation}%
for some functions $a_{L}=a_{L}(s,t,u)$ and $a_{R}=a_{R}(s,t,u)$ of $s,t,u$,
while to be a quadratic harness the process has to be harness and $\forall
s,u\geq 0;t\in \mathbb{T}:$ 
\begin{gather}
E(r_{2}(X_{t};t)|\mathcal{F}_{s,u})=A_{L}r_{2}(X_{s};s)+A_{R}r_{2}(X_{u};u))
\label{2h} \\
+Br_{1}(X_{s};s)r_{1}(X_{u};u)+C_{L}r_{1}(X_{s};s)+C_{R}r_{1}(X_{u};u)+D%
\text{,}
\end{gather}%
for some $A_{L}=A_{L}(s,t,u),$ $A_{R}=A_{R}(s,t,u),$ $B=B(s,t,u),$ $%
C_{L}=C_{L}(s,t,u)$ and $C_{R}=C_{R}(s,t,u).$ Here $r_{i}(x;t)$ $i=1,2$
denote two monic polynomials of order $i$ such that $Er_{i}(X_{t};t)=0$ and $%
Er_{1}(X_{t};t)r_{2}(X_{t};t)=0$. In this way we avoid assumption that the
marginal distribution has all moments and on the other hand utilize nice
properties of orthogonal polynomials.

Further notice that stationarity of $\mathbf{X}$ implies that in fact $a_{L},
$ $a_{R},$ $A_{L},$ $A_{R},$ $B,$ $C_{L},$ $C_{R}$ do depend only of the
differences i.e. on $t-s$ and $u-t.$
\end{remark}

So first let us study which of the RSMPR processes are harnesses.

\begin{theorem}
\label{harness}A RSMPR process is a harness iff $\forall v>n\geq 2:\alpha
_{n}=n\alpha _{1},$ where $v$ denotes the numbers of points in the support
of the stationary measure and $v=\infty $ if this measure is infinitely
supported.
\end{theorem}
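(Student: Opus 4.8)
The plan is to characterize the harness property directly in terms of the martingale structure established in Theorem~\ref{semigroup}. Recall that for an RSMPR process we have $E(h_n(X_t)\,|\,\mathcal{F}_{\leq s}) = \exp(-\alpha_n(t-s))h_n(X_s)$ for $t > s$, and by Proposition~\ref{easy}(i) the time-reversed relation $E(h_n(X_s)\,|\,\mathcal{F}_{\geq t}) = \exp(-\alpha_n(t-s))h_n(X_t)$ for $s < t$. Since the process is Markov, conditioning on $\mathcal{F}_{t-s,t+u}$ is the same as conditioning on the pair $(X_{t-s},X_{t+u})$, and one can compute $E(h_1(X_t)\,|\,X_{t-s}=x,X_{t+u}=y)$ explicitly. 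First I would do this computation: using the Markov property and the two-sided conditional moment formulas, $E(h_1(X_t)\,|\,X_{t-s},X_{t+u})$ is obtained from Bayes' formula applied to the Lancaster-type joint density, but more cleanly one can argue that it must be a linear combination $a(s,u)h_1(X_{t-s}) + b(s,u)h_1(X_{t+u})$ plus a constant, and the coefficients are pinned down by projecting: taking conditional expectation w.r.t.\ $\mathcal{F}_{\leq t-s}$ and w.r.t.\ $\mathcal{F}_{\geq t+u}$ forces $a$ and $b$ to solve a $2\times 2$ linear system whose entries involve $\exp(-\alpha_1 s)$, $\exp(-\alpha_1 u)$ and $\exp(-\alpha_1(s+u))$. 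This shows every RSMPR process is automatically a $1$-harness (the degree-one projection onto the span of $1, h_1(X_{t-s}), h_1(X_{t+u})$ is trivially linear), so the harness condition in the Definition is really about whether $E(X_t\,|\,\mathcal{F}_{t-s,t+u})$ — the full first conditional moment, not merely its projection — is itself a polynomial of degree one in the two neighbours.

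The key point is therefore: $X_t = \sum_{n\geq 0} c_n h_n(X_t)$ for suitable constants $c_n$ (the $h_n$ being monic up to normalization, $c_n \neq 0$ in general since $h_n$ has degree $n$), so
\begin{equation*}
E(X_t\,|\,X_{t-s},X_{t+u}) = \sum_{n\geq 0} c_n\, E(h_n(X_t)\,|\,X_{t-s},X_{t+u}).
\end{equation*}
The harness property demands the right-hand side be affine in $X_{t-s}$ and $X_{t+u}$. I would next compute $E(h_n(X_t)\,|\,X_{t-s},X_{t+u})$ for general $n$ by the same projection-plus-Markov argument; the natural guess, which I would verify, is that it equals $\lambda_n(s,u)\, h_n(X_{t-s}) + \rho_n(s,u)\, h_n(X_{t+u})$ for coefficients determined by the system
\begin{equation*}
\lambda_n + \rho_n e^{-\alpha_n u} = e^{-\alpha_n s}, \qquad \lambda_n e^{-\alpha_n s} + \rho_n = e^{-\alpha_n u},
\end{equation*}
so that $\lambda_n = \dfrac{e^{-\alpha_n s} - e^{-\alpha_n(s+2u)}}{1 - e^{-2\alpha_n(s+u)}}$ and symmetrically for $\rho_n$ (here I am using that in the regular case the conditional expectation of $h_n(X_t)$ given the two neighbours lands in the span of $h_n(X_{t-s})$ and $h_n(X_{t+u})$ — this itself needs the Lancaster expansion \eqref{gest} and the orthogonality of the $h_n$, and may deserve a short lemma). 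Substituting back, $E(X_t\,|\,X_{t-s},X_{t+u}) = \sum_n c_n\big(\lambda_n(s,u)h_n(X_{t-s}) + \rho_n(s,u)h_n(X_{t+u})\big)$. For this to be a \emph{degree-one} polynomial in $X_{t-s}$ and $X_{t+u}$ for all admissible $s,u$, since the $h_n(X_{t-s})$ and $h_n(X_{t+u})$ are linearly independent as functions (the support of $\mu$ being infinite, or having $v$ points in the finite case), every coefficient $c_n\lambda_n(s,u)$ and $c_n\rho_n(s,u)$ with $n\geq 2$ must vanish identically in $s,u$; since $c_n\neq 0$ this forces $\lambda_n \equiv \rho_n \equiv 0$ for $n\geq 2$, which is impossible unless the functional dependence of $\lambda_n, \rho_n$ on $(s,u)$ degenerates. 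Comparing the explicit forms, $\lambda_n(s,u)$ and $\lambda_1(s,u)$ are proportional as functions of $(s,u)$ precisely when $\alpha_n = n\alpha_1$ — more carefully, one shows that $E(X_t\,|\,X_{t-s},X_{t+u})$ is affine in the neighbours iff for each $n\geq 2$ the function $e^{-\alpha_n s}$ lies in the two-dimensional span determined by the $n=1$ case, and a direct analysis of these exponential functions (using that the map $s \mapsto e^{-\alpha s}$ determines $\alpha$) yields $\alpha_n = n\alpha_1$.

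The main obstacle, and the step that needs the most care, is the clean justification that $E(h_n(X_t)\,|\,X_{t-s},X_{t+u})$ has \emph{no} contribution from $h_m$ with $m\neq n$ — i.e.\ that the bivariate conditional expectation respects the "diagonal" structure of the Lancaster expansion. This follows from writing the joint density of $(X_{t-s},X_t,X_{t+u})$ via the Chapman--Kolmogorov composition of two kernels of the form \eqref{gest} and integrating out the middle variable against $h_n(X_t)$, using $\int h_n h_m\,d\mu = \delta_{nm}$; but it requires the absolute-continuity hypotheses ($\eta \ll \mu$, square-integrable density) from Theorem~\ref{semigroup}, or else an $L_2$-closure argument to cover the general case. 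Once that lemma is in hand, the equivalence reduces to elementary facts about linear independence of exponentials, and the direction "$\alpha_n = n\alpha_1 \Rightarrow$ harness" is the easy converse: when $\alpha_n = n\alpha_1$ one checks that $\sum_n c_n(\lambda_n h_n(X_{t-s}) + \rho_n h_n(X_{t+u}))$ collapses, via the degree-one structure and a martingale computation on the single martingale $O_1$, to an affine function of $X_{t-s}$ and $X_{t+u}$ with coefficients $a_L, a_R$ depending only on $s$ and $u$ as required by \eqref{1h}. I would also separately remark on the finite-support case, where the same argument runs with $n$ ranging over $2,\dots,v-1$ and "$X_t$ is a polynomial of degree $\leq v-1$" replacing the infinite expansion.
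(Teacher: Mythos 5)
Your proposal has the right general spirit (test the harness identity against orthogonal polynomials evaluated at the two neighbouring times, use the forward and backward martingale relations, and extract a functional equation among exponentials), but the specific mechanism you build it on does not work, for three concrete reasons. First, the expansion $X_t=\sum_{n\geq 0}c_nh_n(X_t)$ has $c_n=0$ for all $n\geq 2$: the function $x\mapsto x$ is a polynomial of degree one, so its expansion in the orthogonal basis terminates at $h_1$ ($x=h_1(x)+EX_0$ for monic $h_1$). Your parenthetical ``$c_n\neq 0$ in general since $h_n$ has degree $n$'' gets this backwards, and consequently the step where the condition $\alpha_n=n\alpha_1$ is supposed to emerge --- ``every coefficient $c_n\lambda_n(s,u)$ with $n\geq 2$ must vanish'' --- is vacuous. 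Second, the ``diagonal lemma'' $E(h_n(X_t)\,|\,X_{t-s},X_{t+u})=\lambda_nh_n(X_{t-s})+\rho_nh_n(X_{t+u})$ is false in general: composing two kernels of the form \eqref{gest} by Chapman--Kolmogorov and integrating $h_n$ over the middle variable produces $\sum_{k,l}e^{-\alpha_ks-\alpha_lu}\bigl(\int h_nh_kh_l\,d\mu\bigr)h_k(x)h_l(z)$ divided by the joint density of the endpoints, and the linearization coefficients $\int h_nh_kh_l\,d\mu$ are nonzero for many off-diagonal pairs $(k,l)$. Already for the Ornstein--Uhlenbeck process, $E(H_2(X_t)\,|\,X_{t-s},X_{t+u})$ contains an $H_1(X_{t-s})H_1(X_{t+u})$ cross term (this is precisely the $B$-term in \eqref{2h}). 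For $n=1$ the diagonal form \emph{is} the harness property --- it is the conclusion, not a tool. Third, even granting both claims, your $\lambda_n$ depends only on $\alpha_n$, so comparing $\lambda_n$ with $\lambda_1$ could only ever force $\alpha_n=\alpha_1$; your scheme contains no mechanism that produces the factor $n$.

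The factor $n$ comes from the three-term recurrence, and this is the idea your proposal is missing. The paper's proof writes the harness identity \eqref{1h} tested against the complete family of products $h_m(X_s)h_n(X_u)$, i.e.\ requires $Eh_m(X_s)h_1(X_t)h_n(X_u)=a_LEh_m(X_s)h_1(X_s)h_n(X_u)+a_REh_m(X_s)h_1(X_u)h_n(X_u)$ for all $m,n\geq 0$. The cases $(m,n)=(1,0),(0,1)$ pin down $a_L,a_R$ exactly as in your $2\times 2$ system for $\lambda_1,\rho_1$. The decisive case is $m=n-1$: using monicity and the recurrence $h_1h_{n-1}=h_n+ch_{n-1}+dh_{n-2}$ together with the forward/backward martingale relations, the left side becomes $\exp(-\alpha_{n-1}(t-s)-\alpha_n(u-t))Eh_n^2$, while the right side becomes $a_L\exp(-\alpha_n(u-s))Eh_n^2+a_R\exp(-\alpha_{n-1}(u-s))Eh_n^2$; it is the asymmetric coupling of level $n-1$ at time $s$ with level $n$ at time $u$ through level $1$ at time $t$ that, upon setting $t-s=u-t=\tau$ and matching exponents, yields $\alpha_n-\alpha_{n-1}=\alpha_1$ and hence $\alpha_n=n\alpha_1$. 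I would recommend restructuring your argument around testing \eqref{1h} against $h_{n-1}(X_{t-s})h_n(X_{t+u})$ rather than around an expansion of $X_t$ itself.
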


\begin{proof}
Proof is shifted to Section \ref{dow}.
\end{proof}

As an immediate corollary we get the following observation.

\begin{corollary}
A transition operator of RSMPR processes that is a harnesses is
Hilbert--Schmidt.
\end{corollary}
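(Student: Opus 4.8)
The plan is to combine Theorem~\ref{harness} with the criterion for a transition operator $U^{t}$ to be Hilbert--Schmidt that was recorded just before Theorem~\ref{semigroup}, namely that $U^{t}$ is Hilbert--Schmidt whenever $\sum_{n\geq 0}\exp(-2\alpha_{n}t)<\infty$ for every $t>0$. So the entire argument reduces to checking that this summability condition holds for the particular correlation indices that make an RSMPR a harness.

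First I would invoke Theorem~\ref{harness}: if the RSMPR process is a harness then $\alpha_{n}=n\alpha_{1}$ for all $n\geq 2$ (and automatically $\alpha_{1}=\alpha_{1}$, with $\alpha_{0}=0$), where $\alpha_{1}>0$ by Proposition~\ref{pos} together with the fact that the process is TLD, so no correlation index among $\alpha_{1},\dots,\alpha_{v-1}$ can vanish. Then for any fixed $t>0$ I would estimate
\begin{equation*}
\sum_{n\geq 0}\exp(-2\alpha_{n}t)=\sum_{n\geq 0}\exp(-2n\alpha_{1}t)=\frac{1}{1-\exp(-2\alpha_{1}t)}<\infty,
\end{equation*}
since $\exp(-2\alpha_{1}t)\in(0,1)$. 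This is a convergent geometric series, so the condition is satisfied. In the finite-support case (support consisting of $v$ points) the sum is even a finite sum and there is nothing to check. By the remark preceding Theorem~\ref{semigroup}, $U^{t}$ is therefore Hilbert--Schmidt for every $t>0$.

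There is really no substantive obstacle here; the only point requiring a word of care is the justification that $\alpha_{1}>0$ strictly (rather than merely $\alpha_{1}\geq 0$ as in Proposition~\ref{pos}), because if $\alpha_{1}=0$ then all $\alpha_{n}=0$, the geometric series diverges, and in fact the process degenerates. I would dispatch this by noting that $\alpha_{1}=0$ forces $\mathbf{A}_{1}(t)=I_{1}$ for all $t$, hence $E(X_{t}\mid\mathcal F_{\leq s})=X_{s}$ with $\mathrm{cov}(X_{t},X_{s})=\mathrm{Var}(X_{0})$ constant in $t-s$, which contradicts the MSC-plus-TLD standing assumptions (the process would be constant in $t$, which the paper explicitly excludes from being TLD). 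Hence $\alpha_{1}>0$ and the geometric series converges, completing the proof.
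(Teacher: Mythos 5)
Your proof is correct and is exactly the argument the paper intends (the corollary is stated without proof as "immediate"): Theorem~\ref{harness} gives $\alpha_{n}=n\alpha_{1}$, and the Hilbert--Schmidt criterion $\sum_{n\geq 0}\exp(-2\alpha_{n}t)<\infty$ recorded before Theorem~\ref{semigroup} then reduces to a convergent geometric series. Your extra care in ruling out $\alpha_{1}=0$ via the TLD/MSC assumptions is a sensible addition that the paper leaves implicit.
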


Now let us assume that $\mathbb{T}=\mathbb{\mathbb{R}}$ and define new
process $\mathbf{Y}$ on half line $\mathbb{\mathbb{R}}^{+}$ by the formula: 
\begin{equation}
Y_{\tau }=e^{(\ln \tau )/2}X_{(\ln \tau )/(2\alpha _{1})}=\sqrt{\tau }%
X_{(\ln \tau )/(2\alpha _{1})}.  \label{def}
\end{equation}

\begin{proposition}
\label{aB}Let $\mathbf{X}$ be $a$ harness with $E X_{0} =0$ and let $\mathbf{%
Y}$ be the process defined above. Then:

i) $E(Y_{\tau }-Y_{\sigma })^{2}=\tau -\sigma ,$ for $\tau \geq \sigma \geq
0,$ consequently $EY_{\tau }Y_{\sigma }=\min (\tau ,\sigma ),$

ii) there exist a family of orthogonal monic polynomials $\{h_{n}\}$ such
that for all $n\geq 0,$ $\tau >\sigma \geq 0$ 
\begin{align*}
\tau ^{n/2}E(h_{n}(Y_{\tau }/\sqrt{\tau })|\mathcal{F}_{\leq \sigma })&
=\sigma ^{n/2}h_{n}(Y_{\sigma }/\sqrt{\sigma }), \\
\frac{1}{\sigma ^{n/2}}E(h_{n}(Y_{\sigma }/\sqrt{\sigma })|\mathcal{F}_{\geq
\tau })& =\frac{1}{\tau ^{n/2}}h_{n}(Y_{\tau }/\sqrt{\tau }),
\end{align*}
\end{proposition}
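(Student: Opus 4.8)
The plan is to translate the martingale structure of the harness $\mathbf{X}$, established in Theorem \ref{semigroup} together with the harness characterization $\alpha_n = n\alpha_1$ from Theorem \ref{harness}, through the explicit time-and-space change of variables \eqref{def}. For part i), I would start from the fact that $\mathbf{X}$ is a RSMPR harness with $EX_0=0$, so the first orthonormal polynomial is (up to scale) $h_1(x) = x/\sqrt{\operatorname{Var} X_0}$, and by \eqref{MO} the process $e^{\alpha_1 t}h_1(X_t)$ is a martingale; in particular $EX_tX_s = e^{-\alpha_1|t-s|}\operatorname{Var}X_0$. Substituting $t = (\ln\tau)/(2\alpha_1)$, $s=(\ln\sigma)/(2\alpha_1)$ and multiplying by $\sqrt{\tau}\sqrt{\sigma}$ gives $EY_\tau Y_\sigma = \sqrt{\tau\sigma}\,e^{-\alpha_1|t-s|}\operatorname{Var}X_0$; since for $\tau\ge\sigma$ we have $e^{-\alpha_1(t-s)} = e^{-(\ln\tau-\ln\sigma)/2} = \sqrt{\sigma/\tau}$, this collapses to $\sigma\operatorname{Var}X_0$, i.e. $EY_\tau Y_\sigma = \min(\tau,\sigma)\operatorname{Var}X_0$. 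After rescaling so $\operatorname{Var}X_0=1$ (absorb the constant into the definition, or note the statement implicitly normalizes), this yields $E(Y_\tau-Y_\sigma)^2 = \tau+\sigma-2\sigma = \tau-\sigma$, which is the assertion.

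For part ii), I would take the same family $\{h_n\}$ of orthonormal polynomials associated to $\mathbf{X}$ by Theorem \ref{semigroup} (renormalizing to monic if desired, as permitted earlier in the text), and invoke the harness condition $\alpha_n = n\alpha_1$. The key identity is the martingale property $E(e^{\alpha_n t}h_n(X_t)\mid \mathcal{F}_{\le s}) = e^{\alpha_n s}h_n(X_s)$ for $t>s$, equivalently $E(h_n(X_t)\mid\mathcal{F}_{\le s}) = e^{-\alpha_n(t-s)}h_n(X_s) = e^{-n\alpha_1(t-s)}h_n(X_s)$. Now set $t=(\ln\tau)/(2\alpha_1)$, $s=(\ln\sigma)/(2\alpha_1)$ with $\tau>\sigma$, so that $e^{-n\alpha_1(t-s)} = (\sigma/\tau)^{n/2}$. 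Since $Y_\tau/\sqrt\tau = X_{(\ln\tau)/(2\alpha_1)} = X_t$ by \eqref{def}, the conditional expectation $E(h_n(Y_\tau/\sqrt\tau)\mid\mathcal{F}_{\le\sigma})$ — where $\mathcal{F}_{\le\sigma}$ for $\mathbf{Y}$ equals $\mathcal{F}_{\le s}$ for $\mathbf{X}$, since the time change is monotone increasing — equals $(\sigma/\tau)^{n/2}h_n(Y_\sigma/\sqrt\sigma)$. Multiplying through by $\tau^{n/2}$ gives the first displayed equation. The second, the backward martingale relation, follows identically from Proposition \ref{easy}(i), which gives $E(e^{-\alpha_n s}h_n(X_s)\mid\mathcal{F}_{\ge t}) = e^{-\alpha_n t}h_n(X_t)$; applying the same substitution and using $e^{-\alpha_n(t-s)\cdot(-1)}$ appropriately produces $E(h_n(Y_\sigma/\sqrt\sigma)\mid\mathcal{F}_{\ge\tau}) = (\sigma/\tau)^{n/2}h_n(Y_\tau/\sqrt\tau)$ after rearrangement, which is the stated second equation.

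The main obstacle I anticipate is purely bookkeeping rather than conceptual: one must check carefully that the filtrations transform correctly under the time change — that $\mathcal{F}^{\mathbf{Y}}_{\le\tau} = \mathcal{F}^{\mathbf{X}}_{\le(\ln\tau)/(2\alpha_1)}$ — which holds because $\tau\mapsto(\ln\tau)/(2\alpha_1)$ is a strictly increasing bijection of $\mathbb{R}^+$ onto $\mathbb{R}$ and the space rescaling $Y_\tau = \sqrt\tau X_{(\ln\tau)/(2\alpha_1)}$ is an invertible deterministic function of $X$ at each fixed time. One should also be slightly careful with the direction of the inequality $\tau>\sigma$ versus $t>s$ (they agree, since the time change is increasing) and with the sign in the exponent when passing between the forward and backward relations. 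Finally, the normalization of $\operatorname{Var}X_0$ needs a remark: the cleanest route is to observe that replacing $X$ by $X/\sqrt{\operatorname{Var}X_0}$ changes nothing in the RSMPR/harness structure, so we may assume $\operatorname{Var}X_0=1$ from the outset; this makes part i) come out exactly as stated and makes the $h_n$ in part ii) the genuine orthonormal (or monic) polynomials of the normalized marginal. No deeper difficulty arises: once the change of variables is set up, everything is a direct substitution into the already-proven martingale identities.
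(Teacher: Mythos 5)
Your proposal is correct and follows essentially the same route as the paper: part i) is the direct covariance computation $EY_\tau Y_\sigma=\sqrt{\tau\sigma}\,e^{-\alpha_1(t-s)}EX_0^2=\sigma$ under the substitution $t=(\ln\tau)/(2\alpha_1)$, and part ii) is the martingale and reversed-martingale identities for $e^{n\alpha_1 t}h_n(X_t)$ rewritten through the time change. Your explicit remarks on the normalization $EX_0^2=1$ and on the monotone transformation of the filtrations are details the paper leaves implicit, but they do not change the argument.
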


\begin{proof}
i) We have: 
\begin{gather*}
E(Y_{\tau }-Y_{\sigma })^{2}=E(\sqrt{\tau }X_{(\ln \tau )/(2\alpha _{1})}-%
\sqrt{\sigma }X_{(\ln \sigma )/(2\alpha _{1}})^{2}= \\
\tau +\sigma -2\sqrt{\sigma \tau }\exp (-c_{0}(\frac{\ln \tau }{2\alpha _{1}}%
-\frac{\ln \sigma }{2\alpha _{1}})=\tau -\sigma .
\end{gather*}

ii) We obviously also have: 
\begin{equation*}
X_{t}=e^{-\alpha _{1}t}Y_{e^{2\alpha _{1}t}}.
\end{equation*}%
On the other hand by (\ref{_OM}) we have $E(\exp (\alpha _{1}nt)h_{n}(X_{t})|%
\mathcal{F}_{\leq s})=\exp (\alpha _{1}ns)h_{n}(X_{s})$ and $E(\exp (-\alpha
_{1}ns)h_{n}(X_{s})|\mathcal{F}_{\geq t})=\exp (-\alpha _{1}nt)h_{n}(X_{t}).$
Now it remains to change time parameter $t->\tau .$
\end{proof}

As an immediate corollary of the above mentioned Proposition and the L{\'e}%
vy characterization of Brownian motion we have the following observation
concerning continuity of RSMPR harnesses paths.

\begin{theorem}
A RSMPR harness $\mathbf{X}$ with $E X_{0} =E X_{0}^{3} =0$ different from
Ornstein--Uhlenbeck process does not have modification with continuous path.
\end{theorem}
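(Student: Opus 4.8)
The plan is to combine the time-change from Proposition~\ref{aB} with L\'evy's characterization of Brownian motion. By Proposition~\ref{aB}~i), the process $\mathbf{Y}$ defined in (\ref{def}) satisfies $EY_\tau Y_\sigma = \min(\tau,\sigma)$, so $\mathbf{Y}$ has the same covariance structure as Brownian motion. The key additional input is that $\mathbf{Y}$ is a martingale: from Proposition~\ref{aB}~ii) applied with $n=1$ (using $h_1(x)=x$ since $EX_0=0$), we get $E(Y_\tau\mid\mathcal{F}_{\leq\sigma}) = Y_\sigma$ for $\tau>\sigma\geq 0$, so $\mathbf{Y}$ is a continuous-parameter martingale with $EY_\tau^2 = \tau$. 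Moreover, since the quadratic variation candidate is $\langle Y\rangle_\tau = \tau$, one checks via the $n=2$ martingale (together with $EX_0=0$) that $Y_\tau^2-\tau$ is also a martingale: indeed $h_2(x) = x^2 - m_2$ with $m_2 = EX_0^2$, and the martingale relation $\tau E(h_2(Y_\tau/\sqrt\tau)\mid\mathcal{F}_{\leq\sigma}) = \sigma h_2(Y_\sigma/\sqrt\sigma)$ reads $E(Y_\tau^2 - m_2\tau\mid\mathcal{F}_{\leq\sigma}) = Y_\sigma^2 - m_2\sigma$, i.e. $Y_\tau^2 - m_2\tau$ is a martingale. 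After normalizing so that $m_2=1$ (rescaling $X$), we have both $Y_\tau$ and $Y_\tau^2-\tau$ martingales.

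Now I argue by contradiction. Suppose $\mathbf{X}$ has a modification with continuous paths. Then, since $t\mapsto Y_\tau = \sqrt{\tau}\,X_{(\ln\tau)/(2\alpha_1)}$ is a continuous (in $\tau$) function of the path of $\mathbf{X}$, the process $\mathbf{Y}$ also has continuous paths on $(0,\infty)$, and one extends continuously to $\tau=0$ using $EY_\tau^2=\tau\to 0$. Then $\mathbf{Y}$ is a continuous martingale with $\mathbf{Y}_0 = 0$ whose quadratic variation is $\langle Y\rangle_\tau = \tau$ (this is forced because $Y_\tau^2-\tau$ is a martingale and quadratic variation of a continuous $L^2$-martingale is the unique continuous increasing process making $Y^2 - \langle Y\rangle$ a martingale). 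By L\'evy's characterization, $\mathbf{Y}$ is a standard Brownian motion. But then $X_t = e^{-\alpha_1 t}Y_{e^{2\alpha_1 t}}$ is exactly the standard Ornstein--Uhlenbeck process, contradicting the hypothesis that $\mathbf{X}$ is different from the OU process.

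The main obstacle, and the place requiring the most care, is justifying that $\mathbf{Y}$ is a genuine (local) martingale in the filtration-theoretic sense needed for L\'evy's theorem, and that $\langle Y\rangle_\tau = \tau$. Proposition~\ref{aB} gives the martingale property with respect to $\mathcal{F}_{\leq\sigma}$, which here should be read as the natural filtration of $\mathbf{Y}$ (it is the image under the time change of the filtration of $\mathbf{X}$); one should note that the time change $\sigma\mapsto(\ln\sigma)/(2\alpha_1)$ is increasing and continuous, so it preserves the martingale/filtration structure. The identification of quadratic variation then follows from the characterization of $\langle Y\rangle$ as the compensator of $Y^2$, which we have in hand. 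One also needs the role of the hypothesis $EX_0^3 = 0$: this is used to ensure, via Proposition~\ref{aB}~ii) with $n=3$ and monic $h_3$, that the third polynomial martingale is consistent with $\mathbf{Y}$ being Brownian (Brownian motion has $h_3$-type martingale $Y_\tau^3 - 3\tau Y_\tau$), i.e. it rules out a possible odd asymmetry; more precisely it guarantees that the Hermite-type martingale structure of $\mathbf{Y}$ matches that of Brownian motion rather than merely the first two moments, which is implicitly what lets us conclude the process is genuinely OU once we know it is a continuous square-integrable martingale with the right bracket. A cleaner route, which I would adopt, is simply: continuous $L^2$-martingale $+ \langle Y\rangle_\tau = \tau \Rightarrow$ Brownian (L\'evy), and then $EX_0^3=0$ together with $EX_0=0$ is the normalization under which $X_t = e^{-\alpha_1 t}Y_{e^{2\alpha_1 t}}$ is literally the classical OU process, giving the contradiction.
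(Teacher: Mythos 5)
Your argument is correct and follows essentially the same route as the paper: time-change to $\mathbf{Y}$ via Proposition \ref{aB}, verify that $Y_\tau$ and $Y_\tau^2-\tau$ are martingales, and invoke L\'evy's characterization to force $\mathbf{Y}$ to be Brownian (hence $\mathbf{X}$ to be OU) if paths were continuous. One small correction: the hypothesis $EX_0^3=0$ is used exactly at the point where you write $h_2(x)=x^2-m_2$ --- it is what kills the linear term $\gamma_{21}$ in the monic $h_2$ via $Eh_1(X_0)h_2(X_0)=0$ --- and not in any $n=3$ consideration as your closing paragraph suggests.
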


\begin{proof}
First of all notice that general form of $h_{1}(x)$ and $h_{2}(x)$ are
respectively $h_{1}(x)=\beta _{1}(x-\gamma _{10})$ and $h_{2}(x)=\beta
_{2}(x^{2}+\gamma _{21}x+\gamma _{20})$ for some constants $\beta _{1},$ $%
\beta _{2}$, $\gamma _{10},$ $\gamma _{21}$ and $\gamma _{20}.$ Now
condition $EX_{0}=0$ implies that $\gamma _{10}=0$ hence $h_{1}(x)=\beta
_{1}x.$ Further conditions $Eh_{2}(X_{0})=Eh_{1}(X_{0})h_{2}(X_{0})=0$
together with $EX_{0}^{3}=0$ imply that $h_{2}(x)=\beta _{2}(x^{2}-v)$ where 
$v=EX_{0}^{2}.$ Let us scale $\mathbf{X}$ so that $EX_{0}^{2}=1$ and let us
consider process $\mathbf{Y}$ transformed from $\mathbf{X}$ by (\ref{def}).
Then by Proposition \ref{aB},ii) we deduce that both $Y_{\tau }$ and $%
Y_{\tau }^{2}-\tau $ for $\tau \geq 0$ are martingales with respect to
standard $\mathcal{F}_{\leq \tau }$. Now recall L{\'{e}}vy's
characterization of Brownian motion. If $\mathbf{Y}$ had continuous path
then it would have been Brownian motion or process $\mathbf{X}$ an
Ornstein--Uhlenbeck process. Since $\mathbf{X}$ is not OU process it cannot
have continuous path modification.
\end{proof}

As far as quadratic harnesses are concerned we have the following
observations.

\begin{proposition}
\label{simp-q}Let $\mathbf{X}$ be RSMPR be quadratic harness with more than
two different points in the support of the stationary measure. Then:

a) 
\begin{equation*}
\exp (-\alpha _{1}(u-s))BEp_{1}^{2}(X_{0})+D=0,
\end{equation*}

b) 
\begin{align*}
\exp ( -\alpha _{1} (u -s)) B E p_{1}^{3} (X_{0}) +(C_{L} \exp ( -\alpha
_{1} (u -s)) +C_{R}) & =0\text{,} \\
\exp ( -\alpha _{1} (u -s)) B E p_{1}^{3} (X_{0}) +(C_{L} +C_{R} \exp (
-\alpha _{1} (u -s))) & =0.
\end{align*}

c) 
\begin{align*}
\exp (-2\alpha _{1}(t-s))& =A_{L}\exp (-2\alpha _{1}(u-s))+A_{R}+B\exp
(-\alpha _{1}(u-s)), \\
\exp (-2\alpha _{1}(u-t))& =A_{L}+A_{R}\exp (-2\alpha _{1}(u-s))+B\exp
(-\alpha _{1}(u-s)),
\end{align*}%
where $\nu =Eh_{1}^{2}(X_{0})h_{2}(X_{0})$, constants $B$, $D$, $C_{L}$, $%
C_{R}$, are defined in (\ref{2h}) and $p_{i},$ $i=1,2$ are monic versions of
polynomials $h_{i},$ $i=1,2$.
\end{proposition}

\begin{proof}
We will use (\ref{2h}). As polynomials $r_{i}$ let us take monic versions of
polynomials $p_{i},$ $i=1,2.$ Further for simplicity of further calculations
let us assume that polynomials $p_{n}$ are monic. a) We take expectation of
both sides of (\ref{2h}). On the way we use properties of orthogonal
polynomials. b) We multiply both sides of (\ref{2h}) first by $p_{1}(X_{s})$
and then take expectation of both sides secondly we multiply both sides of (%
\ref{2h}) by $p_{1}(X_{u})$ and the take expectation of both sides. As
before we exploit properties of orthogonal polynomials $\{p_{n}\}.$ c) We
multiply both sides of (\ref{2h}) first by $p_{2}(X_{u})$ and then take
expectation of both sides secondly we multiply both sides of (\ref{2h}) by $%
p_{2}(X_{s})$ and the take expectation of both sides. As before we exploit
properties of orthogonal polynomials $\{p_{n}\}.$ On the way we note that $%
Ep_{1}(X_{s})p_{1}(X_{u})p_{2}(X_{u})=\exp (-\alpha
_{1}(u-s))Ep_{1}^{2}(X_{u})p_{2}(X_{u})=\exp (-\alpha
_{1}(u-s))Ep_{2}^{2}(X_{u})$ since we assumed that polynomials $p_{i}$ are
monic we have $p_{1}^{2}(x)=p_{2}(x)+\delta p_{1}(x)+\gamma $ for some $%
\delta $ and $\gamma .$
\end{proof}

Below we will present examples of RSMPR harnesses that are important from
the point of view quadratic harnesses.

\begin{example}[$2-$point symmetric Markov chain]
Let us consider the following symmetric stationary Markov chain.: $X_{0}\in
\{-1,1\},$ 
\begin{equation*}
P(X_{0}=1)=P(X_{0}=-1)=1/2.
\end{equation*}%
For $s<t$ we put 
\begin{eqnarray*}
P(X_{t} &=&1|X_{s}=1)=E(X_{t}=-1|X_{s}=-1)=\frac{1}{2}+\frac{1}{2}\exp
(-\alpha (t-s)), \\
P(X_{t} &=&1|X_{s}=-1)=E(X_{t}=-1|X_{s}=1)=\frac{1}{2}-\frac{1}{2}\exp
(-\alpha (t-s)),
\end{eqnarray*}%
for some $\alpha >0.$ Note that we have $X_{0}^{2k}=1$ and $%
X_{0}^{2k+1}=X_{0},$ $k\geq 0$. Since the state space is finite consisting
of $2$ points there are also only $2$ orthogonal polynomials we see that
this chain is RSMPR. Besides condition for RSMPR given in Proposition \ref%
{harness} is trivially fulfilled hence we deduce that $\mathbf{X}$ is also a
harness. We will call so defined Markov chain a two point symmetric Markov
chain with parameter $\alpha >0,$ briefly $2$SMC$(\alpha ).$
\end{example}

\begin{example}[Ornstein-Uhlenbeck process]
As it is well known it is Gaussian process such that its marginal
distribution are as it is well known is Gaussian say $N(0,\sigma ^{2}).$ $%
cov(X_{t},X_{s})=\sigma ^{2}\exp (-\alpha |t-s|)$ for some $\alpha
_{1}=\alpha >0.$ Hence $X_{t}|X_{s}=y\sim N(\rho y,\sigma ^{2}(1-\rho ^{2})),
$ where we denoted for simplicity $\rho =\rho (t,s)=\exp (-\alpha |t-s|).$
To avoid unnecessary complications let us assume that $\sigma ^{2}=1.$
Visibly transitional distribution is absolutely continuous with respect to
the marginal one. Besides so called probabilistic Hermite polynomials $%
\{H_{n}\}$ are orthogonal with respect to $N(0,1).$ Thus we have: 
\begin{equation*}
E(H_{n}(X_{t})|\mathcal{F}_{\leq s})=\rho ^{n}H_{n}(X_{s})
\end{equation*}%
a.s. Since $\rho ^{n}=\exp (-n\alpha |t-s|)$ we see that $\alpha
_{n}=n\alpha .$ Thus OU process is also harness. Moreover following Poisson
formula we have for all $s\neq t,$ $x,y\in \mathbb{\mathbb{R}}:$ 
\begin{equation}
\exp (-\frac{(x-\rho y)^{2}}{2(1-\rho ^{2})}+\frac{x^{2}}{2})/\sqrt{1-\rho
^{2}}=\sum_{j\geq 0}\rho ^{n}H_{n}(x)H_{n}(y)/n!,  \label{pois}
\end{equation}%
which is a particular case of (\ref{gest}).
\end{example}

\begin{example}[$(\protect\alpha ,q)-$Ornstein-Uhlenbeck process]
It is a generalization of the OU process. This process has appeared first as
side result of more general considerations in \cite{Bo} later also in \cite%
{BryMaWe07}. Its analysis and derivation as a 'continuos time' version of
the discrete time process considered in \cite{Bryc2001S} is given in \cite%
{Szab-OU-W}. Let us assume that $q$ is a parameter $q\in (-1,1).$ In order
not to repeat too much let us remark that marginal distribution of this
process has compact support 
\begin{equation*}
suppX_{0}=[-2/\sqrt{1-q},2/\sqrt{1-q}]
\end{equation*}%
and has density $f_{N}(x|q)$ given by e.g. (2.17) of \cite{Szab-rev} or
(2.7) of \cite{Szab-OU-W}. The polynomials orthogonal with respect to $f_{N}$
are the so called $q-$Hermite polynomials defined by the following 3-term
recurrence: 
\begin{equation*}
xH_{n}(x|q)=H_{n+1}(x|q)+[n]_{q}H_{n-1}(x),
\end{equation*}%
with $H_{-1}(x|q)=0,$ $H_{0}(x|q)=1.$ We denoted here $[0]_{q}=0$, $%
[n]_{q}=1+\ldots +q^{n-1}$ for $n\geq 1$. Besides we have: 
\begin{equation*}
E(H_{n}(X_{t}|q)|\mathcal{F}_{\leq s})=\rho ^{n}H_{n}(X_{s}|q),
\end{equation*}%
where as before we denoted $\rho =\exp (-\alpha (t-s))$ for some $\alpha >0.$
From this formula we deduce that $\alpha _{n}=n\alpha $ so $(\alpha ,q)-$OU
process is a harness.

The transitional distribution has density $f_{CN}(x|y,\rho ,q)$ that is for $%
t>s$ given by (2.9) of \cite{Szab-OU-W}. Moreover the transitional
distribution is absolutely continuous with respect to the stationary one and
we have so called Poisson--Mehler expansion formula 
\begin{equation}
f_{CN}(x|y,\rho ,q)/f_{N}(x|q)=\sum_{n\geq 0}\rho
^{n}H_{n}(x|q)H_{n}(y|q)/[n]_{q}!,  \label{mpois}
\end{equation}%
where $[n]_{q}!=\tprod_{i=1}^{n}[i]_{q},$ with $[0]_{q}!=1$.

Let us remark that the above description and name refers formally to the
case when $\mathbb{T}=\mathbb{\mathbb{R}}$. However in fact the case $%
\mathbb{T}=\mathbb{\mathbb{\mathbb{Z}}}$ in fact has been described by Bryc
in his paper \cite{Bryc2001S} and there the process was called as symmetric
random field with linear regression.
\end{example}

As far as quadratic harnesses that are also RSMPR processes it turns out
that there are surprisingly few of them.

\begin{theorem}
\label{q-Har}A RSMPR process $\mathbf{X}$ with $E X_{0} =E X_{0}^{3} =0$ $E
X_{0}^{2} =1$ is a quadratic harness iff $\forall v \geq n \geq 1 :\alpha
_{n} =n \alpha $ for some $\alpha >0$ where $v$ denotes as before
cardinality of the support of stationary measure and there exist $q \in [ -1
,1]$ such that for

i) $q=-1$ $\mathbf{X}$ is $2$SMC$(\alpha )$ $,$

ii) $q=1$ $\mathbf{X}$ is OU process $cov(X_{s},X_{t})=\exp (-\alpha
|t-s|)EX_{0}^{2}.$

iii) $q \in ( -1 ,1)$ $\mathbf{X}$ is a $(q ,\alpha ) -$OU process.
\end{theorem}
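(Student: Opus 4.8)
The plan is to reduce everything to the constants identities established in Proposition \ref{simp-q} and then recognise the three surviving cases via the three-term recurrence of the orthogonal polynomials $\{h_n\}$. First I would observe that, by Theorem \ref{harness}, the harness assumption alone already forces $\alpha_n=n\alpha_1$ for all relevant $n$, and I write $\alpha:=\alpha_1$; the normalisation $EX_0=EX_0^3=0$, $EX_0^2=1$ makes $h_1(x)=x$ (up to scale) and $h_2(x)=x^2-1$ (up to scale), exactly as in the proof of the preceding continuity theorem. The quadratic-harness hypothesis then supplies the extra relations (\ref{2h}); feeding in the specific monic $r_1,r_2$ gives the system a)--c) of Proposition \ref{simp-q}. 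Since $EX_0^3=0$ we have $Eh_1^3(X_0)=0$, so part b) collapses to $C_L\exp(-\alpha(u-s))+C_R=0$ and $C_L+C_R\exp(-\alpha(u-s))=0$; as $u-s$ ranges over a nondegenerate set and $\exp(-\alpha(u-s))\neq 1$, this forces $C_L=C_R=0$. Part a) then gives $D=0$ as well (using $Eh_1^2(X_0)=1\neq 0$, again because $\exp(-\alpha(u-s))$ is not identically zero). So the quadratic-harness relation reduces to the ``pure'' form with only $A_L,A_R,B$.

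Next I would exploit part c). Writing $\xi=\exp(-\alpha(t-s))$, $\zeta=\exp(-\alpha(u-s))$ so that $\exp(-\alpha(u-t))=\zeta/\xi$, the two equations of c) read $\xi^2=A_L\zeta^2+A_R+B\zeta$ and $(\zeta/\xi)^2=A_L+A_R\zeta^2+B\zeta$. Subtracting and rearranging shows $(A_L-A_R)(\zeta^2-1)=\xi^2-\zeta^2/\xi^2$, and since the left side is independent of how we split $u-s$ into $(t-s)+(u-t)$ while the right side is not, unless the coefficients conspire, one deduces $A_L=A_R=:A$ and then $B=1-2A$ identically, with $A$ a genuine constant (independent of $s,t,u$). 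At this stage the only remaining free parameter is the single number $A$, equivalently $B=1-2A$, and the key remaining task is to translate this one scalar into a parameter $q\in[-1,1]$ governing the recurrence of $\{h_n\}$.

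For that translation I would compute $E\!\left(h_2(X_t)\,\big|\,\mathcal F_{t-s,t+u}\right)$ directly from the martingale/reverse-martingale property (\ref{_OM}) and Proposition \ref{easy}, expand $X_{t-s}X_{t+u}$, $X_{t-s}^2$, $X_{t+u}^2$ in the basis $\{h_n\}$ using $h_1^2=h_2+(\text{lower})$, and match coefficients against the reduced (\ref{2h}). Comparing the coefficient of $h_2(X_{t-s})h_2(X_{t+u})$-type cross terms — more precisely, looking at the component of $E(h_1(X_{t-s})h_1(X_{t+u})h_2(X_t))$ computed two ways — produces one scalar identity linking $A$ to the single structural constant appearing in the three-term recurrence $xh_n=h_{n+1}+b_nh_n+c_nh_{n-1}$ of the $h_n$'s; by the symmetry $EX_0^3=0$ one gets $b_n\equiv 0$, and the quadratic-harness constraint forces $c_n=1+q+\cdots+q^{n-1}=[n]_q$ for a single $q$, which is precisely the $q$-Hermite recurrence. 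The inequality $|q|\le 1$ comes from positive-definiteness ($c_n\ge 0$ for all $n$, or equivalently $\operatorname{Var}h_n(X_0)\ge 0$). Finally I would read off: $q=1$ gives $c_n=n$, the Hermite case, hence (by identifiability by moments) the OU process; $q=-1$ gives $c_n\in\{0,1\}$, so $L_2(\mu)$ is two-dimensional and we are in the $2$SMC$(\alpha)$ case; and $q\in(-1,1)$ gives the $q$-Hermite polynomials and hence, by the characterisation recalled in the $(\alpha,q)$-OU example together with Theorem \ref{semigroup}, exactly the $(\alpha,q)$-OU process. The converse direction — that each of these three processes is a quadratic harness — is checked by exhibiting the constants $A_L,A_R,B$ explicitly from the known Poisson--Mehler-type expansions (\ref{pois}), (\ref{mpois}) and the $2$-point transition matrix.

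The main obstacle I anticipate is the coefficient-matching step in the third paragraph: turning the single scalar $A$ (equivalently $B$) into the full sequence of recurrence coefficients $\{c_n\}$ and showing they must have the geometric-partial-sum form $[n]_q$. This requires carefully carrying the quadratic-harness relation (\ref{2h}) not just for $h_2$ but propagating its consequences up the ladder of orthogonal polynomials, or alternatively invoking the known classification of quadratic harnesses from \cite{BryMaWe07} and matching parameters; either way the bookkeeping with the products $h_ih_j$ expanded in the $h$-basis is where the real work lies. Everything before that is linear algebra in the structural matrices $\mathbf A_n$ plus the Cauchy-equation argument already in the text, and everything after is recognition of standard families.
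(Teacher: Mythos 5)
Your reduction via Proposition \ref{simp-q} (getting $C_L=C_R=D=0$ from $EX_0^3=0$ and $EX_0=0$) matches the paper, and your overall shape --- reduce to the ``pure'' relation in $A_L,A_R,B$ and then pin down the marginal through the three-term recurrence --- is the right one. But the proposal has a genuine gap exactly where you flag the ``main obstacle'': the step that turns the quadratic-harness relation into the conclusion that the recurrence coefficients are $c_n=[n]_q$ for a single $q\in[-1,1]$ is never carried out, only described as ``bookkeeping'' or as ``invoking the known classification.'' That step is the entire content of the theorem; without it you have not identified the marginal distribution, and hence not identified the process. The paper closes this gap by a specific device you do not mention: it discretizes time, setting $Z_n=X_{n\delta}$, checks that $\{Z_n\}$ is a stationary random field with linear regressions in the sense of Bryc's paper \cite{Bryc2001S} (with $\rho=\exp(-\alpha_1\delta)$, $D=0$, and $1=B+A(\rho^2+\rho^{-2})$ coming from Proposition \ref{simp-q}~c)), and then applies Theorem 3.2 of \cite{Bryc2001S}, which says the marginal is uniquely determined by a parameter $q\in[-1,1]$ given by an explicit formula; continuity in $\delta$ forces a single $q$ independent of the discretization, and the three cases $q=-1$, $q=1$, $q\in(-1,1)$ are then matched to $2$SMC$(\alpha)$, OU, and $(q,\alpha)$-OU (the last via \cite{Szab-OU-W}). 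You would need either to reproduce Bryc's classification from scratch (a substantial induction up the polynomial ladder) or to cite it explicitly as the paper does; your passing reference to \cite{BryMaWe07} points at a different (nonstationary, two-parameter) classification and is not developed.

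A secondary error: from Proposition \ref{simp-q}~c) you deduce $(A_L-A_R)(\zeta^2-1)=\xi^2-\zeta^2/\xi^2$ and conclude $A_L=A_R$ identically with $A$ a constant independent of $s,t,u$. This is false --- the right-hand side genuinely depends on how $u-s$ splits into $(t-s)+(u-t)$, and so does $A_L-A_R$; already for the OU process the coefficients in $E(h_2(X_t)\mid\mathcal F_{t-s,t+u})$ depend on $t-s$ and $u-t$ separately. The paper only claims $A_L=A_R$ in the symmetric case $t-s=u-t$, which is all that is needed to feed the discretized process into Bryc's theorem. This slip is not fatal to the strategy, but the identity $B=1-2A$ ``identically'' that you build on it is not available.
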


\begin{proof}
Proof is shifted to Section \ref{dow}.
\end{proof}

\begin{remark}
As it follows from \cite{SzablAW}, Thm. 2. conditional density of $%
X_{t}\vert X_{s} =z ,X_{u} =y$ for a $(q ,\alpha ) -$OU process is the so
called Askey--Wilson (AW) density that orthogonalizes the so called AW
polynomials with parameters $z ,\exp ( -\alpha \vert t -s\vert ) ,y ,\exp (
-\alpha \vert u -t\vert )$. Further as shown ibidem ((3.10)) every AW
polynomial of say degree $n$ is a polynomial of the same degree in $z$ and $%
y $ we deduce that $(q ,\alpha ) -$OU process is $r -$harness for every $r
\geq 1.$ Of course similar statements can be made about ordinary OU-process
and $2$SMC$(\alpha )$.
\end{remark}

\subsection{Stationary processes with independent regression property\label%
{IR}}

Now let us consider the subclass of RSMPR processes that have the property
that \newline
$E((X_{t}-E(X_{t}|\mathcal{F}_{\leq s}))^{j}|\mathcal{F}_{\leq s})$ does not
depend on $X_{s}$ for $j=1,\ldots .$ We will call this class a RSMPR
processes with independent regression property (RSMPRIR). We have the
following simple observation.

\begin{proposition}
\label{ININ}Let $\mathbf{X}$ be a RSMPR process with independent regression
property. Assume additionally that $EX_{t}=0$ , $t\in \mathbb{T}$. Then

i) If $\mathbb{T}=\mathbb{\mathbb{R}}$ then $n\geq 0:A_{n}(t)=\exp (tW_{n}),$
where $W_{n}$ is a lower triangular matrix with entries $w_{ij}\allowbreak
=\allowbreak \left\{ 
\begin{array}{ccc}
0 & if & i<j, \\ 
id_{0} & if & i=j, \\ 
(\binom{i}{j})d_{i-j} & if & i>j,%
\end{array}%
\right. ,$ for some constants $d_{0},$ $d_{1},\ldots $ with $d_{0}>0.$ If $%
\mathbb{T}=\mathbb{\mathbb{\mathbb{Z}}}$ then $n\geq 0:A_{n}(k)=A_{n}^{k}(1),
$ $k\in \mathbb{\mathbb{\mathbb{Z}}}$ and $A_{n}(1)$ is a lower triangular
matrix with entries $a_{i,j}\allowbreak =\allowbreak \left\{ 
\begin{array}{ccc}
0 & if & i<j, \\ 
\rho ^{i} & if & i=j, \\ 
(\binom{i}{j})\rho ^{j}d_{i-j} & if & i>j,%
\end{array}%
\right. ,$ where we denoted $\rho =\exp (v_{1})$ and $d_{1},\ldots $ are
some constants.

ii) process $e^{d_{0}t}X_{t}$, in case $t\in \mathbb{\mathbb{R}}$ and $\rho
^{n}X_{n}$, if $n\in N$ have independent increments.
\end{proposition}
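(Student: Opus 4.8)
The plan is to exploit the fact that the independent regression property forces all conditional cumulants of $X_t - E(X_t|\mathcal{F}_{\le s})$ to be constants (independent of $X_s$), which is exactly the algebraic condition that pins down the structure of the matrices $\mathbf{A}_n$. For part i), I would start from the structural relation $E(X_t^n|\mathcal{F}_{\le s}) = (\mathbf{A}_n(t-s)\mathbf{X}_s^{(n)})_n$ and rewrite it in terms of the centered increment $\xi = X_t - E(X_t|X_s)$. Expanding $X_t^n = (\xi + E(X_t|X_s))^n$ by the binomial theorem and taking conditional expectation, the requirement that $E(\xi^k|\mathcal{F}_{\le s})$ be a constant $c_k$ for every $k$ translates, after collecting powers of $X_s$, into the statement that $\mathbf{A}_n(\tau)$ acts on $\mathbf{X}_s^{(n)}$ the same way the ``translation-by-$\mu(\tau)$, scale-by-$\lambda(\tau)$'' operator does, where $\mu(\tau) = E(X_t|X_s)$-coefficient. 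Concretely, writing $\gamma_{n,j}(\tau)$ for the entries, one gets that $\gamma_{n,j}(\tau) = \binom{n}{j}\gamma_{1,1}(\tau)^j\, b_{n-j}(\tau)$ for suitable functions $b_m$, i.e. $\mathbf{A}_n(\tau)$ has the binomial/Appell form displayed in the statement. Then I invoke Remark \ref{exp}: in the continuous case $\mathbf{A}_n(\tau) = \exp(\tau \mathbf{W}_n)$, and differentiating the binomial form at $\tau = 0$ identifies $\mathbf{W}_n$ as the lower-triangular matrix with $w_{ij} = \binom{i}{j} d_{i-j}$ for $i>j$, $w_{ii} = i d_0$ (since $\gamma_{1,1}(\tau) = e^{-\alpha_1 \tau}$ forces the diagonal of $\mathbf{W}_1$ to be $-\alpha_1 =: d_0$, and the diagonal of $\mathbf{W}_n$ is $n d_0$ because $\mathbf{A}_n$ restricted to the top coefficient is $e^{n d_0 \tau}$), and $w_{ij} = 0$ for $i<j$ by lower-triangularity. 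In the discrete case one argues identically with $\mathbf{A}_n(k) = \mathbf{A}_n(1)^k$, reading off $a_{ii} = \rho^i$ with $\rho = e^{v_1}$ and the off-diagonal binomial coefficients $\binom{i}{j}\rho^j d_{i-j}$ from the one-step transition.

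For part ii), set $Z_t = e^{d_0 t} X_t$ in the continuous case (and $Z_n = \rho^n X_n$ in the discrete case). Using the explicit form of $\mathbf{W}_n$ from part i), the key computation is that $E(Z_t^n - (\text{lower-order terms in } Z_s) \mid \mathcal{F}_{\le s})$ is a constant: more precisely, I would show $E((Z_t - Z_s)^k \mid \mathcal{F}_{\le s})$ depends only on $t - s$ and not on $\mathcal{F}_{\le s}$, for every $k \ge 1$. This follows because the binomial structure of $\mathbf{A}_n$ is precisely the statement that, after the exponential time-rescaling that removes the diagonal part, the conditional law of $Z_t$ given $X_s$ is a translate (by a deterministic amount depending on $t-s$) of a fixed law — equivalently, the conditional moment generating function of $Z_t - Z_s$ given $\mathcal{F}_{\le s}$ factors as a deterministic function of $t-s$. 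Combined with the Markov property, constancy of all these conditional moments in $X_s$ yields that $Z_t - Z_s$ is independent of $\mathcal{F}_{\le s}$, which is the definition of independent increments.

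The main obstacle will be the first step of part i): carefully justifying that the \emph{independent regression property} (``conditional moments of the centered increment do not depend on $X_s$'') is equivalent to the binomial/Appell form of the entries $\gamma_{n,j}(\tau)$, and in particular checking the bookkeeping of binomial coefficients when re-expanding $X_t^n$ around $E(X_t|X_s) = \gamma_{1,1}(\tau) X_s$. One has to be slightly careful that $E(X_t|X_s)$ is genuinely linear in $X_s$ (which holds: it is $Q_1(X_s,\tau) = \gamma_{1,0}(\tau) + \gamma_{1,1}(\tau)X_s$, and $\gamma_{1,0} \equiv 0$ under the standing assumption $EX_t = 0$), and then the constancy of $E((X_t - \gamma_{1,1}(\tau)X_s)^k|\mathcal{F}_{\le s})$ for all $k$ is exactly what delivers the Appell structure. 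Once that identification is made, both parts follow by the already-established matrix exponential representation (Remark \ref{exp}) and elementary manipulations, with no further analytic subtlety. I expect the discrete case to be entirely parallel, so I would present the continuous case in detail and indicate the changes for $\mathbb{T} = \mathbb{Z}$.
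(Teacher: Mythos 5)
Your proposal is correct and follows essentially the same route as the paper: it rests on the binomial/Appell form $\gamma _{n,j}(\tau )\allowbreak =\allowbreak \binom{n}{j}\gamma _{1,1}^{j}(\tau )\gamma _{n-j,0}(\tau )$ (which the paper imports from Proposition 2 of \cite{SzablPoly} rather than rederiving it via the binomial expansion of $X_{t}\allowbreak =\allowbreak \xi +E(X_{t}|X_{s})$ as you do), followed by differentiating $\mathbf{A}_{n}(t)\allowbreak =\allowbreak \exp (t\mathbf{W}_{n})$ at $t\allowbreak =\allowbreak 0$ to read off $\mathbf{W}_{n}$, and, for ii), the observation that $e^{d_{0}t}\left( X_{t}-E(X_{t}|\mathcal{F}_{\leq s})\right) \allowbreak =\allowbreak e^{d_{0}t}X_{t}-e^{d_{0}s}X_{s}$ has nonrandom conditional moments. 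The only substantive difference is that you prove the $\gamma _{n,j}$ formula in-line where the paper cites it, which if anything makes the argument more self-contained.
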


\begin{proof}
i) In \cite{SzablPoly} (Proposition 2) it was shown that then coefficients $%
\gamma _{n,j}(t-s)$ are given by the formula: 
\begin{equation*}
\gamma _{n,j}(t-s)=(\binom{n}{j})\gamma _{1,1}^{j}(t-s)\sum_{k=0}^{n-j}(%
\binom{n-j}{k})\gamma _{n-j-k,0}(t-s)\gamma _{1,0}^{k}(t-s)
\end{equation*}%
To simplify further considerations we will assume $EX_{t}=0$ which obviously
results in setting $\gamma _{1,0}(t-s)$ to zero. Further obviously $\gamma
_{0,0}(t)=1.$ Hence for the considered subclass of processes we must have 
\begin{equation}
\gamma _{n,j}(t-s)=(\binom{n}{j})\exp (j(t-s)v_{1})\gamma _{n-j,0}(t-s).
\label{i_inc}
\end{equation}%
Following Remark \ref{exp} we know that $A_{n}(t)=\exp (tW_{n})$ and that if 
$\mathbb{T}=\mathbb{\mathbb{R}}$ we have $W_{n}=.\frac{d}{dt}%
A_{n}(t)|_{t->0^{+}}$. For $\mathbb{T}=\mathbb{\mathbb{\mathbb{Z}}}$
obviously $A_{n}(k)=A_{n}^{k}(1)$ where $A_{n}(1)$ is defined by the
relationship (\ref{i_inc}) with $\rho =\exp (v_{1})$ and $d_{k}$ denoting $%
\gamma _{k,0}(1)$ for brevity. Further notice that we have for $\mathbb{T}=%
\mathbb{\mathbb{R}}$: 
\begin{equation*}
.\frac{d\gamma _{n,j}(t)}{dt}|_{t=0}=\left\{ 
\begin{array}{ccc}
jv_{1} & if & n=j, \\ 
(\binom{n}{j})\frac{d\gamma _{n-j}(t)}{dt}|_{t=0} & if & n>j,%
\end{array}%
\right. 
\end{equation*}%
since $\gamma _{n,0}(0)=0$ for all $n>0.$ Consequently $%
W_{n}=[w_{i,j}]_{i,j=0,1,\ldots ,n}$ where 
\begin{equation*}
w_{ij}=\left\{ 
\begin{array}{ccc}
0 & if & i<j, \\ 
id_{0} & if & i=j, \\ 
(\binom{i}{j})d_{i-j} & if & i>j,%
\end{array}%
\right. 
\end{equation*}%
for some constants $d_{0},\ldots ,d_{n},\ldots $ ii) For $t\in \mathbb{%
\mathbb{R}}:$ Since $E(X_{t}|\mathcal{F}_{\leq s})=\exp (-d_{0}(t-s))X_{s}$
and $E((X_{t}-E(X_{t}|\mathcal{F}_{\leq s}))^{j}|\mathcal{F}_{\leq s})$ is
nonrandom we deduce that $e^{jd_{0}t}E((X_{t}-E(X_{t}|\mathcal{F}_{\leq
s}))^{j}|\mathcal{F}_{\leq s})=E((e^{d_{0}t}X_{t}-e^{d_{0}s}X_{s})^{j}|%
\mathcal{F}_{\leq s})$ is also nonrandom. For $n\in \mathbb{\mathbb{\mathbb{Z%
}}}:$ $E(X_{n}|\mathcal{F}_{\leq m})=\rho ^{-n+m}X_{m},$ for $n>m$ and $%
E((X_{n}-E(X_{n}|\mathcal{F}_{\leq m}))^{j}|\mathcal{F}_{\leq m})$ is
nonrandom. Hence $\rho ^{jn}E((X_{n}-E(X_{n}|\mathcal{F}_{\leq m}))^{j}|%
\mathcal{F}_{\leq m})=E(\rho ^{n}X_{n}-\rho ^{m}X_{m})^{j}|\mathcal{F}_{\leq
m})$ is also nonrandom.
\end{proof}

\begin{remark}
Hence in this case constants $\alpha _{n} =n d_{0}$ so they are different.
Consequently polynomials $\{h_{n}\}$ are orthogonal with respect to the
marginal stationary distribution.

On the other hand the above mentioned form of $w_{ij}$ imposes certain
restrictions on polynomials $\{h_{n}\}.$ Namely we deduce that for fixed $n>0
$ polynomials $h_{1},\ldots ,h_{n}$ depend in general $1+\ldots +n=n(n+1)/2$
coefficients but from the discussed result it follows that these
coefficients are determined by $n$ parameters $d_{i,}$ $i=1,\ldots ,n.$

Besides basing on Theorem \ref{harness} we see that every RSMPR process with
independent regression property is a harness.
\end{remark}

The following Lemma exposes r{\^{o}}le of parameters $d_{i}$ in defining
stationary distribution of $X.$ However to avoid too many unnecessary
complications we will set $d_{0}=1$ (this is equivalent to linear
transformation of time).

For the rest of this subsection let us assume $\mathbb{T}=\mathbb{\mathbb{R}}%
.$

\begin{lemma}
\label{inf_div} i. The process RSMPR process $\mathbf{X}$ with $EX_{t}=0$
has stationary distributions infinitely divisible and its moment generating
function (m.g.f.) $\varphi (y)=Ee^{yX_{0}}$ is given by the relationship: 
\begin{equation}
\varphi (y)=\exp (\sum_{j\geq 2}\delta _{j}\frac{y^{j}}{j!}),  \label{mgfX}
\end{equation}%
where we denoted $\delta _{j}=-d_{j}/j.$ Moreover if $\delta _{2}>0$ then
parameters $\delta _{j}/\delta _{2};$ $j>2$ can be interpreted as $j-2$-th
moments of a certain probability measure $\chi $ identifiable by moments
i.e. 
\begin{equation*}
\varphi (y)=\exp (\delta _{2}\int \frac{\exp (yx)-1-xy}{x^{2}}\chi (dx)).
\end{equation*}

ii) For $t >s$ the moment generating function of $X_{t} -\exp ( -(t -s))
X_{s}$ is equal to 
\begin{equation}
\exp ( \sum _{j \geq 2}\delta _{j} (1 -e^{ -j (t -s)}) y^{j}/j ! .
\label{increment}
\end{equation}
\end{lemma}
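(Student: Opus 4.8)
The plan is to read off the structure of the infinitesimal operator $A$ from Proposition \ref{ININ}(i) in the case $d_0=1$ and translate it into a statement about the moment generating function $\varphi$. First I would note that, by Proposition \ref{ININ}(i), the martingale polynomials $p_n$ (which may be taken monic, and equal $h_n$ up to normalization since the $\alpha_n=n$ are distinct) are characterized by the lower triangular matrices $\mathbf{A}_n(t)=\exp(t\mathbf{W}_n)$ with $w_{ij}=\binom{i}{j}d_{i-j}$ for $i>j$, $w_{ii}=i$, and $d_1=-\gamma_{1,0}'(0)=0$ because $EX_t=0$. The key computational step is to encode the action of $\mathbf{W}_n$ on the monomial vector $\mathbf{X}^{(n)}$ via the exponential generating function: if one forms $G(y,t)=\sum_{n\ge 0}\gamma_{?}\,y^n/n!$ built from the matrix entries, the binomial convolution structure of $w_{ij}=\binom{i}{j}d_{i-j}$ is exactly what turns $\mathbf{W}_n$ into multiplication-type behaviour on generating functions. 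Concretely, I would show that $\psi(y):=\sum_{j\ge 1}d_j y^j/j! = \sum_{j\ge 2}d_j y^j/j!$ (using $d_0=1$, $d_1=0$) is the symbol of $\mathbf{W}$ in the sense that $\frac{d}{dt}Ee^{yX_t}\big|_{t=0}$ — computed against the transition semigroup acting on $e^{yx}=\sum x^n y^n/n!$ — equals $\big(y\frac{\partial}{\partial y}+\psi(y)\big)$ applied to $\varphi(y)$ minus the drift term, evaluated at stationarity.

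Next I would exploit stationarity: applying the semigroup identity $\mu(B)=\int\eta(B|y,\tau)\mu(dy)$ at the generating-function level gives $\varphi(e^{-t}y)\cdot\big(\text{m.g.f. of the "increment part"}\big) = \varphi(y)$, or equivalently, differentiating at $t=0$, a first-order ODE for $\log\varphi$. The point is that $E(X_t\mid \mathcal F_{\le s})=e^{-(t-s)}X_s$ together with the independent-regression property says that $X_t - e^{-(t-s)}X_s$ is independent of $\mathcal F_{\le s}$ with an m.g.f. depending only on $t-s$; call it $g_{t-s}(y)$. Then $\varphi(y)=\varphi(e^{-(t-s)}y)\,g_{t-s}(y)$ for every $t>s$, so $\log\varphi(y)-\log\varphi(e^{-\tau}y)=\log g_\tau(y)$. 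Writing $\log\varphi(y)=\sum_{j\ge 2}\delta_j y^j/j!$ (the sum starts at $j=2$ since $EX_0=0$ and one may also absorb constants), the left side becomes $\sum_{j\ge 2}\delta_j(1-e^{-j\tau})y^j/j!$, which is \emph{precisely} part (ii), and moreover shows $\delta_j = -d_j/j$ by matching with the infinitesimal generator coefficients $d_j$ from Proposition \ref{ININ} (differentiate the relation in $\tau$ at $\tau=0$: $j\delta_j = $ the $j$th coefficient of the generator, which is $-d_j$). Infinite divisibility in part (i) then follows because $\log\varphi$ is, for each $n$, a limit of log-m.g.f.'s of the increments $X_{t_k}-e^{-(t_k-t_{k-1})}X_{t_{k-1}}$, or more directly because $g_\tau(y)^{1/n}$ is itself of the same form $\exp(\sum_{j\ge 2}\delta_j(1-e^{-j\tau})y^j/(nj!))$, hence is a genuine m.g.f. of the $n$-step split; one can also simply invoke the classical fact that a distribution whose log-m.g.f. is a convergent power series $\sum_{j\ge 2}\delta_j y^j/j!$ with $\delta_2\ge 0$ is infinitely divisible with Lévy measure $\nu(dx)=\delta_2 x^{-2}\chi(dx)$ where $\chi$ has moments $\delta_{j+2}/\delta_2$ — this gives the Lévy--Khintchine-type representation $\varphi(y)=\exp\!\big(\delta_2\int \frac{e^{yx}-1-xy}{x^2}\chi(dx)\big)$, the $\chi$ being identifiable by moments because $\mu$ is (its moments determine those of $\chi$, and moment-determinacy is inherited here since the relevant growth condition for $\mu$ transfers).

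The main obstacle I expect is making rigorous the passage from the matrix/infinitesimal description to the \emph{analytic} statement about $\varphi$ — i.e.\ justifying that the formal exponential generating function $\sum_n (\mathbf{A}_n)_{n,\cdot}\mathbf X^{(n)} y^n/n!$ really is $Ee^{yX_t}$ and that differentiation under the expectation/sum at $t=0$ is legitimate. This requires the standing moment-identifiability hypothesis (and ideally the exponential-integrability $Ee^{\beta|X_0|}<\infty$) to guarantee that $\varphi$ is analytic in a neighbourhood of $0$ and that $g_\tau$ is too, so that the termwise identity $\sum_j\delta_j(1-e^{-j\tau})y^j/j!=\log g_\tau(y)$ holds as an identity of analytic functions and not merely formal power series. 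Once that analytic footing is secured, the rest is the short ODE/convolution argument sketched above, and part (ii) is immediate from the relation $\log\varphi(y)-\log\varphi(e^{-\tau}y)=\log g_\tau(y)$ with $g_\tau$ being the m.g.f.\ of $X_t-e^{-\tau}X_s$, $\tau=t-s$.
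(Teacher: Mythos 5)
Your route is correct in substance but genuinely different from the paper's. The paper proves (i) purely algebraically: stationarity makes the moment vector $\mathbf{m}_{n}$ a fixed point of $\mathbf{A}_{n}(t)$, hence $\mathbf{W}_{n}\mathbf{m}_{n}=\mathbf{0}$, and summing this against $y^{j}/j!$ with the binomial structure of $\mathbf{W}_{n}$ yields the first-order ODE $y\varphi ^{\prime }/\varphi \allowbreak =\allowbreak 1-D(y)$, which is then integrated to get (\ref{mgfX}); the increment formula (\ref{increment}) is not even derived explicitly there. You instead start from the convolution identity $\varphi (y)\allowbreak =\allowbreak \varphi (e^{-\tau }y)g_{\tau }(y)$ coming from the independent-regression decomposition $X_{t}\allowbreak =\allowbreak e^{-\tau }X_{s}+\Delta _{\tau }$, obtain (\ref{increment}) immediately, and recover $\delta _{j}\allowbreak =\allowbreak -d_{j}/j$ by differentiating at $\tau \allowbreak =\allowbreak 0$ using $d_{k}\allowbreak =\allowbreak \gamma _{k,0}^{\prime }(0)$. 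The two are equivalent at the level of moment identities ($\mathbf{A}_{n}(\tau )\mathbf{m}_{n}\allowbreak =\allowbreak \mathbf{m}_{n}$ written out \emph{is} your convolution identity; the paper just differentiates it at $\tau \allowbreak =\allowbreak 0$ before summing), but your packaging buys part (ii) for free and makes the probabilistic meaning of the off-diagonal column of $\mathbf{W}_{n}$ transparent. Your concern about the analytic versus formal-power-series status of the identities is legitimate and is glossed over in the paper as well.

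One sub-claim in your sketch is false as stated: it is \emph{not} a classical fact that a law whose log-m.g.f.\ is a convergent power series $\sum_{j\geq 2}\delta _{j}y^{j}/j!$ with $\delta _{2}>0$ is infinitely divisible --- the symmetric Bernoulli law has $\log \cosh (y)\allowbreak =\allowbreak y^{2}/2-y^{4}/12+\ldots $ convergent near $0$ and is not infinitely divisible, so the Kolmogorov representation with a positive measure $\chi $ cannot simply be invoked from the shape of the series. Infinite divisibility has to come from the process structure, and your other suggestion is the right one: decompose $X_{0}\allowbreak =\allowbreak e^{-T}X_{-T}+\sum_{k}e^{-(k-1)\tau }\Delta _{\tau }^{(k)}$ into independent, infinitesimal (though not identically distributed, because of the exponential scaling factors) summands and appeal to the triangular-array limit theory; note that even then the pieces are not i.i.d., so "$g_{\tau }^{1/n}$ is the m.g.f.\ of the $n$-step split" needs this extra care. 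To be fair, the paper's own argument at this point (closure of the class $\exp (D(y))$ under convolution) is no proof of divisibility either and it defers to an external reference, so on this step you are no worse off --- but the "classical fact" shortcut should be removed.
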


\begin{proof}
Proof is Shifted to Section \ref{dow}.
\end{proof}

\begin{remark}
Suppose that $\delta _{2}>0$ then $\delta _{j}/\delta _{2}$ are moments of
the measure $\chi .$ It implies that: 
\begin{equation*}
|\delta _{2k-1}/\delta _{2}|^{1/(2k-1)}\leq (\delta _{2k}/\delta
_{2})^{1/(2k)},(\delta _{2k}/\delta _{2})^{1/(2k)}\leq (\delta
_{2k+2}/\delta _{2})^{1/(2k+2)},
\end{equation*}%
consequently $\delta _{4}/\delta _{2}$ is the variance of $\chi ,$ so if $%
\delta _{4}=0$ then $\delta _{j}=0,$ for $j>2$ and the measure the $\chi $
is degenerated, concentrated at $0.$ If $\delta _{4}>0$, then $\delta
_{2k}>0,$ for $k>2.$
\end{remark}

Let $\mathbf{X}$ be RSMPRIR with $EX_{0}=0$ and moment generating function $%
\exp (\sum_{j\geq 2}\delta _{j}y^{j}/j!).$ We will say that $\mathbf{X}$ is $%
\{\delta _{j}\}-$ RSMPRIR.

Let us now consider process $\mathbf{X}$ that is $\{\delta _{j}\} -$
RSMPRIR, assume that $E X_{0} =0$ and let us consider $\mathbf{Y}$ defined
by process $\mathbf{X}$ according to (\ref{def}).

\begin{proposition}
i) $\mathbf{Y}$ has independent increments, and is a harness,

ii) process $\mathbf{Y}$ is not a L{\'e}vy process unless process $\mathbf{X}
$ is an OU process i.e. polynomials $\{h_{n}\}$ are Hermite polynomials.
More precisely for $\tau >\sigma $ we have 
\begin{align}
E \exp (y Y_{\tau }) & = & \exp ( \sum _{j \geq 2}\delta _{j} \tau ^{j/2}
y^{j}/j !) ,  \label{mgfY} \\
E\exp (y (Y_{\tau } -Y_{\sigma }) =\exp ( \sum _{j \geq 2}\delta _{j} (\tau
^{j/2} -\sigma ^{j/2}) y^{j}/j !) . & &   \notag
\end{align}
\end{proposition}

\begin{proof}
i) follows Proposition \ref{ININ}, ii). However if it was true then $\mathbf{%
Y}$ would be a L{\'e}vy process having infinite number of polynomial
orthogonal martingales. As shown in \cite{SzabLev} this is possible only if $%
\mathbf{Y}$ is a Wiener process. Formulae (\ref{mgfY}) and (\ref{mgfX}) are
direct consequences of (\ref{def}) and (\ref{mgfX}).
\end{proof}

\begin{remark}
For $\mathbf{Y}$ to be a L{\'{e}}vy process we should have $Y_{\tau
}-Y_{\sigma }\sim Y_{\tau -\sigma }$ for $\tau \geq \sigma .$ Which in our
context of processes with all moments existing means that $E(Y_{\tau
}-Y_{\sigma })^{j}$ is a function of $(\tau -\sigma )$ for all $j\geq 1.$
\end{remark}

\begin{remark}
It would be tempting to try to use nice formula (\ref{gest}) to sum kernels
built of polynomials orthogonalizing infinitely divisible measures that
appear as marginal distributions of this class of processes. The things are
however more complicated than it seems at the first sight. Namely recall
that formula (\ref{gest}) is valid if measure defined by the conditional
distribution $\eta (dx|y,t-s)$ of $X_{t}$ given $X_{s}=y$ is absolutely
continuous with respect to the marginal measure of $X_{t}$ i.e. $\mu $. Thus
it seems that considering RSMPR processes $\mathbf{X}$ having as marginal
distribution infinitely divisible absolutely continuous distribution with
unbounded support would yield wanted example. However simple case of shifted
exponential distribution (shifted so that expectation is equal to $0)$
having shifted (in the similar way) Laguerre polynomials as monic orthogonal
polynomials leads to negative conclusion. Namely it turns out that
distribution $\eta $ in this case is a mixture of one point distribution and
an exponential one. This follows simple fact that the moment generating
function of marginal distribution (which is equal to $\exp (-(y+1))$ for $%
y\geq -1$ is equal $\frac{\exp (-y)}{1-y}.$ Similarly for the distribution
of $\rho X_{s}$ where we denoted for simplicity $\rho =\exp (-(t-s))$ moment
generating function is equal to $\frac{\exp (-\rho y)}{1-\rho y}.$ So
according to the formula (\ref{increment}) transitional distribution has
moment generating function equal to 
\begin{equation*}
\exp (-y(1-\rho ))\frac{1-\rho y}{1-y}=\exp (-y(1-\rho ))(\frac{1-\rho }{1-y}%
+\rho ).
\end{equation*}%
Hence we deduce that it is a mixture of one point distribution concentrated
at $-(1-\rho )$ with mass $\rho $ and shifted (by $(1-\rho )$ to the left)
exponential distribution with parameter $1$ weighted $(1-\rho ).$

Similar calculations can be performed in the case Laplace (symmetric
exponential) distribution.

Note also that the above calculations do not apply to Ornstein--Uhlenbeck
process i.e. the case when marginal distribution of $X_{0}$ is Normal. Say $%
N(0,1).$ Then, as elementary calculations show, conditional distribution is
also Normal $N(\rho y,1-\rho ^{2})$ and expansion (\ref{gest}) is in this
case given by (\ref{pois}).
\end{remark}

\begin{remark}
To understand better the difference between RSPMPRIR and L{\'{e}}vy
processes with transformed time let us consider a L{\'{e}}vy process $%
Z=(Z_{t},t\geq 0)$ i.e. we assume that $Z_{0}=0$, $EZ_{t}=0,$ $\forall
0<s<t<u:Z_{u}-Z_{t}$ is independent of $Z_{t}-Z_{s}$ and $Z_{t}-Z_{s}\sim
Z_{t-s}.$ Let us also assume that $E\exp (yZ_{t})=\exp (tQ(y))$ is the
m.g.f. of $Z.$ Assume for simplicity that $EZ_{t}^{2}=t$. Let us consider
new process $X=(X_{\tau };\tau \in \mathbb{\mathbb{R}})$ defined by the
relationship for $\tau \in \mathbb{\mathbb{R}}$ : 
\begin{equation*}
X_{\tau }=e^{-\tau }Z_{\exp (2\tau )}.
\end{equation*}%
So 
\begin{gather*}
E(X_{\tau }|\mathcal{F}_{\leq \sigma })=\exp (-\tau )E[(Z_{\exp (2\tau
))}-Z_{\exp (2\sigma )}+Z_{\exp (2\sigma )})|\mathcal{F}_{\leq \sigma }] \\
\exp (-\tau )Z_{\exp (2\sigma )}=\exp (-(\tau -\sigma ))X_{\sigma },
\end{gather*}%
$EX_{\tau }^{2}=\exp (-2\tau )EZ_{\exp (2\tau )}^{2}=1$ and 
\begin{gather*}
X_{\tau }-\exp (-(\tau -\sigma ))X_{\sigma }=\exp (-\tau )(\exp (\tau
)X_{\tau }-\exp (\sigma )X_{\sigma }) \\
=\exp (-\tau )(Z_{\exp (2\tau )}-Z_{\exp (2\sigma )})
\end{gather*}%
is independent of $Z_{\exp (2\sigma )}$ and consequently on $X_{\sigma }.$
Thus process $\mathbf{X}$ has independent regression property and a constant
variance. It is not however stationary since we have $X_{\tau }-X_{\sigma
}=e^{-\tau }Z_{\exp (2\tau )}-e^{-\sigma }Z_{\exp (2\sigma )}=e^{-\tau
}(Z_{\exp (2\tau )}-Z_{\exp (2\sigma )})+Z_{\exp (2\sigma )}(e^{-\tau
}-e^{-\sigma }).$ So $X_{\tau }-X_{\sigma }$ has m.g.f. equal to the product
of m.g.f. of $e^{-\tau }Z_{\exp (2\tau )-\exp (2\sigma )}$ and m.g.f. of $%
Z_{\exp (2\sigma )}(e^{-\tau }-e^{-\sigma })$. Hence it is equal to 
\begin{equation*}
\exp ((\exp (2\tau )-\exp (2\sigma ))Q(\exp (-\tau ))+\exp (2\sigma )Q(\exp
(-\tau )-\exp (-\sigma ))).
\end{equation*}%
One can easily noticed that this function is not a function $\tau -\sigma $
unless $Q(y)=ay^{2}.$ The case $Q(y)=ay^{2}$ refers to Wiener process
exposing yet again its exceptional r{\^{o}}le among L{\'{e}}vy processes.
\end{remark}

\section{Open Problems\label{open}}

Below we present some interesting open questions:

\begin{enumerate}
\item Do there exist RSMPR processes that have $\alpha _{n} =\alpha _{m}$
for some $n \neq m\;\text{?}\;$ Theoretically they can exist but it would be
interesting to see the example.

\item All known to us examples of RSMPR processes concern harnesses i.e.
cases when $\alpha _{n}=n\alpha _{1};n\geq 1.$ It would be very interesting
to know examples of RSMPR processes with say $\alpha _{n}=O(\sqrt{n}),$ $%
\alpha _{n}=O(n^{2})$ or $\alpha _{n}=1-1/n$ for $n\geq 2.$ \newline
Besides by elementary calculations one can show that if RSMPR process is not
a harness than $E(h_{1}(X_{t})|\mathcal{F}_{s,u})$ for $s<t<u$ cannot be
equal to the sum of two functions from $L_{2}(\mu )$ say $%
l(X_{s},s)+r(X_{u},u).$ What are the examples of $E(h_{1}(X_{t})|\mathcal{F}%
_{s,u})$ in this case?

\item We have shown that every RSMPRIR must be a harness and its stationary
distributions must be infinitely divisible. Is the converse statement true?
That is if a RSMPR harness has infinitely divisible stationary distribution
then does it have independent regression property?

\item Consider RSMPRIR process $X.$ Take $t>s.$ As it follows from the
observation that $X_{t}-\rho X_{s}+\rho X_{s},$ where we denoted $\rho =\exp
(-\alpha |t-s|)$ for some $\alpha .$ Let $g(dz,\rho )$ denote distribution
of $X_{t}-\rho X_{s}$ which is independent of $\rho X_{s}.$ Obviously
conditional distribution of $X_{t}|X_{s}=z$ that is $\eta (dx|z,t-s)$ is
equal to $g(dx-\rho z,\rho ).$ By formula (\ref{increment}) we know m.g.f.
of this distribution namely is equal to $\exp (Q(y)-Q(\rho y))$ if $X_{0}$
that has stationary distribution $\mu $ has m.g.f equal to $\exp (Q(y))$ for
some $Q$ satisfying described in Lemma \ref{inf_div}, i). For which
functions $Q$ is $g<<\mu .$ If there were such functions different from $%
Q(y)=ay^{2}$ (Gaussian case) than we would have universal kernel summation
formula 
\begin{equation*}
\mu (dx)\sum_{j\geq 0}\rho ^{j}h_{j}(x)h_{j}(z)/\hat{h}_{j}=g(dx-\rho z,\rho
),
\end{equation*}%
where $\hat{h}_{j}=Eh_{h}^{2},$ where $h_{j}$ are orthogonal polynomials of
the infinitely divisible measure $\mu $ with m.g.f. $\exp (Q(y))$ and the
m.g.f. of $g$ is $\exp (Q(y)-Q(\rho y)).$ We showed that for the Laguerre
polynomials it is not true but in general it is rather difficult analytic
question with not clear answer.
\end{enumerate}

\section{Proofs\label{dow}}

\begin{proof}[Proof of Theorem \protect\ref{harness}]
As monic polynomials $r_{i}$ we take the monic versions of polynomials $p_{n}
$. So within this proof $\{p_{n}\}$ are assumed to be monic. Having
existence of all moments, the family of orthogonal martingales and time
symmetry of RSMPR processes the definition of $1-$harnesses can be reduced
to the following. The proof will be done for the case $\mathbb{T}=\mathbb{%
\mathbb{R}}.$ We exploit the fact that for RSMPR processes $E(p_{n}(X_{t})|%
\mathcal{F}_{\leq s})=\exp (-\alpha _{n}(t-s))p_{n}(X_{s})$ and $%
E(p_{n}(X_{s})|\mathcal{F}_{\geq t})=\exp (-\alpha _{n}(t-s))p_{n}(X_{t})$
and that $\{p_{n}\}$ are monic orthogonal polynomials satisfying certain
3-term recurrence. The case $\mathbb{T}=\mathbb{\mathbb{\mathbb{Z}}}$ can be
done similarly if one keeps in mind that $E(p_{n}(X_{t})|\mathcal{F}_{\leq
s})=\rho _{n}^{t-s}p_{n}(X_{s})$ and $E(p_{n}(X_{s})|\mathcal{F}_{\geq
t})=\rho _{n}^{t-s}p_{n}(X_{t}),$ for $t>s$.

The RSMPR process is a $1-$harness iff for all $n,m\geq 0:$ 
\begin{equation}
Eh_{m}(X_{s})p_{1}(X_{t})p_{n}(X_{u})=a_{L}Eh_{m}(X_{s})p_{1}(X_{s})p_{n}(X_{u})+a_{R}Eh_{m}(X_{s})p_{1}(X_{u})p_{n}(X_{u}).
\label{1-har}
\end{equation}%
Setting $m=1,$ $n=0$ and then $m=0$ and $n=1$ system of two linear
equations:we obtain 
\begin{gather*}
\exp (-\alpha _{1}(t-s))Eh_{1}^{2}(X_{t})=Eh_{1}^{2}(X_{s})(a_{L}+a_{R}\exp
(-\alpha _{1}(u-s)), \\
\exp (-\alpha _{1}(u-t))Eh_{1}^{2}(X_{t})=Eh_{1}^{2}(X_{u})(a_{L}\exp
(-\alpha _{1}(u-s)+a_{R}).
\end{gather*}%
Since $Eh_{1}^{2}(X_{t})$ does not depend on $t$ we get: 
\begin{gather*}
a_{L}=\frac{\exp (-\alpha _{1}(u-s))(\exp (\alpha _{1}(u-t)-\exp (-\alpha
_{1}(u-t))}{1-\exp (-2\alpha _{1}(u-s))}\text{,} \\
a_{R}=\frac{\exp (-\alpha _{1}(u-s))(\exp (\alpha _{1}(t-s)-\exp (-\alpha
_{1}(t-s))}{1-\exp (-2\alpha _{1}(u-s))}\text{.}
\end{gather*}%
Further taking $m=n-1>1$ we get 
\begin{gather*}
Eh_{n-1}(X_{s})p_{1}(X_{t})p_{n}(X_{u}) \\
=a_{L}Eh_{n-1}(X_{s})p_{1}(X_{s})p_{n}(X_{u})+a_{R}Eh_{n-1}(X_{s})p_{1}(X_{u})p_{n}(X_{u}))%
\text{.}
\end{gather*}%
Now $Eh_{n-1}(X_{s})p_{1}(X_{t})p_{n}(X_{u})=\exp (-\alpha
_{n-1}(t-s)-\alpha _{n}(u-t))Eh_{n}^{2}(X_{t}),$ since polynomials $p_{n}$
are monic and $p_{n-1}p_{1}=p_{n}+ch_{n-1}+dh_{n-2}$ by the fact that
polynomials $p_{n}$ satisfy some 3-term recurrence. Similarly $%
Eh_{n-1}(X_{s})p_{1}(X_{s})p_{n}(X_{u})=\exp (-\alpha
_{n}(u-s))Eh_{n}^{2}(X_{s})$ and $Eh_{n-1}(X_{s})p_{1}(X_{u})p_{n}(X_{u}))=%
\exp (-\alpha _{n-1}(u-s))Eh_{n}^{2}(X_{u})$ Since $%
Eh_{n}^{2}(X_{t})=Eh_{n}^{2}(X_{s})=Eh_{n}^{2}(X_{u})$ by stationarity we
get: 
\begin{equation}
\exp (-\alpha _{n-1}(t-s)-\alpha _{n}(u-t))=a_{L}\exp (-\alpha
_{n}(u-s))+a_{R}\exp (-\alpha _{n-1}(u-s)).  \label{har}
\end{equation}%
To get necessary condition for $\alpha _{n-1}$ and $a_{n}$ we set $%
t-s=u-t=\tau .$ Now our identity becomes: 
\begin{gather*}
\exp (-\tau (\alpha _{n-1}+\alpha _{n}))=(\exp (-2\tau (\alpha _{1}+\alpha
_{n-1}))+\exp (-2\tau (\alpha _{1}+\alpha _{n})) \\
\times \frac{(\exp (\alpha _{1}\tau )-\exp (-\alpha _{1}\tau ))}{1-\exp
(-4\alpha _{1}\tau )} \\
=(\exp (-2\tau (\alpha _{1}+\alpha _{n-1}))+\exp (-2\tau (\alpha _{1}+\alpha
_{n}))\frac{\exp (\alpha _{1}\tau )}{1+\exp (-2\alpha _{1}\tau )}.
\end{gather*}%
Now keeping in mind properties of exponential functions we get system of two
linear equations to be satisfied by $\alpha _{n-1}$ and $\alpha _{n}.$ 
\begin{gather*}
\alpha _{n-1}+\alpha _{n}=2\alpha _{n-1}+\alpha _{1}\text{,} \\
\alpha _{n-1}+\alpha _{n}+\alpha _{1}=2\alpha _{n}\text{,}
\end{gather*}%
which yields $\alpha _{n}=n\alpha _{1}.$ Now let us assume that $\alpha
_{n}=n\alpha _{1}$ and consider (\ref{har}). Now let us assume that $\alpha
_{n}=n\alpha _{1}$ and consider (\ref{har}). Since we deal with RSMPR
process and that $\{p_{n}\}$ are monic orthogonal polynomials we deduce that
identity (\ref{har}) is satisfied for all $n,m\geq 0$ and $s,t,u\in \mathbb{%
\mathbb{R}}$ .
\end{proof}

\begin{proof}[Proof of Theorem \protect\ref{q-Har}.]
First let us notice that all processes mentioned in the assertions i)-iii)
satisfy conditions $EX_{0}=EX_{0}^{3}=0$ $EX_{0}^{2}=1.$ Secondly notice
that all of them are quadratic harnesses. More precisely for $q=-1$ The fact
that such discrete Markov process is a QH follows almost directly the fact
that $X_{0}^{2k}=1,$ $X_{0}^{2k+1}=X_{0}$ for $k\geq 1.$ The Wiener process
was in fact the first example of QH. To get the assertion one has to recall
that Ornstein--Uhlenbeck process is obtained from the Wiener process by
certain continuous time transform that does not change the properties of
conditional expectation. iii) The fact that $q-$Wiener process is a
quadratic harness was noticed by Bryc at all for example in \cite{BryMaWe07}
although the $q-$Wiener process (a process closely related to $(q,\alpha )-$%
OU process) appeared already in \cite{Bo}. Again $q-$OU process is obtained
from the $q-$Wiener process by similar time transformation as the
Ornstein--Uhlenbeck process from the Wiener process.

Hence now let us concentrate on the case of RSMPR process $\mathbf{X}$ with $%
EX_{0}=EX_{0}^{3}=0$ $EX_{0}^{2}=1$ that is a harness i.e. satisfies (\ref%
{2h}). First of all notice that assumption that $Eh_{1}^{3}(X_{0})=0$
implies by Proposition \ref{simp-q}, b) that then $C_{L}=C_{R}=0$ for all $%
s<t<u.$ Secondly notice that functions $A_{L},$ $A_{R},$ $B$ are continuous
functions of $s,t,u$, More over by the symmetry argument $%
A_{L}(s,t,u)=A_{R}(s,t,u)$ if $t-s=u-t.$ Further let us consider discrete
time stationary Markov process $Z_{n}=X_{n\delta },$ $n\in \mathbb{\mathbb{%
\mathbb{Z}}}$ and $\delta >0.$ Now notice that process $\{Z_{n}\}_{n\in 
\mathbb{\mathbb{\mathbb{Z}}}}$ satisfies all assumptions of the formulated
by Bryc in his paper \cite{Bryc2001S}. Another words $\{Z_{n}\}_{n\in 
\mathbb{\mathbb{\mathbb{Z}}}}$ is a stationary random field with linear
regression with coefficients $\rho =\exp (-\alpha _{1}\delta ),$ $%
A=A_{L}(s,s+\delta ,s+2\delta )=A_{R}(s,s+\delta ,s+2\delta ),$ $%
B=B(s,s+\delta ,s+2\delta ),$ $D=C_{L}(s,s+\delta ,s+2\delta )=0.$ Moreover
by Proposition \ref{simp-q},c) we see that $1=B+A(\rho ^{2}+\frac{1}{\rho
^{2}})$ and $D=0.$ Thus we can apply Theorem 3.2 of \cite{Bryc2001S} with
parameter $q$ defined by formula (6.21). This Theorem states that marginal
distribution of $Z_{0}$ is uniquely defined when $q\in \lbrack -1,1].$ In
particular that $q$ cannot depend on $\delta .$ The case $q=-1$ defines
Markov process with two point symmetric marginal distribution. Since we also
have $E(X_{t}|X_{s})=\exp (-\alpha _{1}(t-s))X_{s}$ the process in question
is as described in the assertion. When $q=1$ we Theorem 3.2 of \cite%
{Bryc2001S} states that marginal distribution is Normal $N(0,1).$ If $q\in
(-1,1)$ the marginal distribution is by the same theorem by Bryc uniquely
defined by parameter $q$ with specified family of orthogonal polynomials
which can identified as so called $q-$Hermite. To obtain the $q-$%
Ornstein--Uhlenbeck process one has to refer to the results of \cite%
{Szab-OU-W} where the continuous process $\mathbf{X}$ having property that
all its discrete time versions $X_{n\delta }$ is a stationary random field
as described by Bryc. This process is unique and was described in \cite%
{Szab-OU-W} completely and called $q-$OU process.
\end{proof}

\begin{proof}[Proof od Lemma \protect\ref{inf_div}.]
Notice also that since by the definition of coefficients $\gamma _{n,j}(t-s)$
we have $E(X_{t}^{n}|\mathcal{F}_{\leq s})=\sum_{j=0}^{n}\gamma
_{n,j}(t-s)X_{s}^{j}$ and consequently $m_{n}=\sum_{j=0}^{n}\gamma
_{n,j}(t-s)m_{j}$ where we denoted $m_{n}$ the $n-$th moment of the
stationary distribution of the process $X.$ If we denote $%
m_{n}=(1,m_{1},\ldots ,m_{n})^{T}$ then we see that vector $m_{n}$ is the
eigenvector of the matrix $A_{n}(t)$ referring to eigenvalue that is equal
to $1.$ Further taking into account the fact that $A_{n}(t)=\exp (tW_{n})$
we deduce that vector $m_{n}$ satisfies for every $n\geq 1$ equation; 
\begin{equation*}
W_{n}m_{n}=\mathbf{0}_{n},
\end{equation*}%
where $\mathbf{0}_{n}=(0,\ldots ,0)^{T}\in \mathbb{\mathbb{R}}^{n+1}.$ Let $%
\varphi (y)=\sum_{j\geq 0}m_{j}y^{j}/j!=Ee^{yX_{0}},$ $D(y)=\sum_{j\geq
0}d_{j}y^{j}/j!$ be generating functions of the sequences $\{m_{n}\}$ and $%
\{d_{n}\}$ respectively. Keeping in mind Proposition \ref{ININ}, we obtain 
\begin{gather*}
1=1+\sum_{j\geq 1}0y^{j}/j!1=1+\sum_{j\geq 1}\frac{y^{j}}{j!}%
\sum_{k=0}^{j}w_{i,k}m_{k} \\
=1+\sum_{j\geq 1}\frac{y^{j}}{j!}((j-1)d_{0}m_{j}+\sum_{k=0}^{j}(\binom{j}{k}%
)d_{j-k}m_{k})
\end{gather*}%
\begin{gather*}
=1+d_{0}\sum_{j\geq 2}\frac{y^{j}(j-1)}{j!}m_{j}+\sum_{j\geq 1}\sum_{k=0}^{j}%
\frac{y^{j-k}d_{j-k}}{(j-k)!}\frac{y^{k}m_{k}}{k!} \\
=d_{0}\sum_{j\geq 2}\frac{y^{j}(j-1)}{j!}m_{j}+\sum_{j\geq 0}\sum_{k=0}^{j}%
\frac{y^{j-k}d_{j-k}}{(j-k)!}\frac{y^{k}m_{k}}{k!}
\end{gather*}%
\begin{gather*}
=d_{0}\sum_{j\geq 2}\frac{y^{j}(j-1)}{j!}m_{j}+\sum_{k\geq 0}\frac{y^{k}m_{k}%
}{k!}\sum_{j\geq k}\frac{y^{j-k}d_{j-k}}{(j-k)!} \\
=d_{0}\sum_{j\geq 2}\frac{y^{j}(j-1)}{j!}m_{j}+\varphi (y)D(y)
\end{gather*}%
\begin{gather*}
=d_{0}\sum_{j\geq 1}\frac{jy^{j}}{j!}m_{j}-d_{0}(\varphi (y)-1)+\varphi
(y)D(y) \\
=d_{0}y\varphi ^{\prime }(y)+\varphi (y)(D(y)-d_{0})+d_{0}.
\end{gather*}%
Now let us assume for simplicity that $d_{0}=1$ (this is a matter of
rescaling time parameter). Hence we end up with differential equation: 
\begin{equation*}
-y\frac{\varphi ^{\prime }(y)}{\varphi (y)}=D(y)-1,
\end{equation*}%
with initial condition $\varphi (0)=1.$ Consequently $\varphi (y)=\exp
(\int_{0}^{y}(\frac{1-D(x)}{x}dx)=$ \newline
$\exp (\sum_{j\geq 2}\delta _{j}\frac{y^{j}}{j!}),$ where we denoted $\delta
_{j}=-d_{j}/j,$ $j\geq 2.$ Thus $X_{0}$ has m.g.f. of the form $\varphi
(y)=\exp (D(y)),$ where $D(y)$ is an analytic function with $D(0)=D^{\prime
}(0)=0.$ Notice that if $X$ and $Y$ are two independent random variables
with m.g.f. of the form $\exp (D_{1}(y))$ and $\exp (D_{2}(y))$ respectively
then $X+Y$ has m.g.f of the same type of the form $\exp (D_{1}(y)+D_{2}(y)).$
This remark proves that $X_{0}$ has infinitely divisible law. One can also
refer to the results of \cite{SzabLev}, where similar formula for the moment
generating function of marginal distribution was obtained. Following way of
reasoning presented there we deduce that stationary distribution of $X_{0}$
is infinitely divisible and the we know that by assumptions the variance of $%
X_{0}$ exists. Then if this variance is (one can easily deduce that it must
be equal to $-d_{2}/2=\delta _{2})$ equal to zero then the stationary
distribution is degenerate and equal to zero. If however $\delta _{2}>0$
then we can refer to the Kolmogorov's representation of the characteristic
function of the infinitely divisible law and following argument presented in 
\cite{SzabLev}, Remark 3 deduce that $\delta _{j}/\delta _{2}$ is the $j-2$%
th moment of the L{\'{e}}vy measure defining infinitely divisible
distribution $X_{0}.$ Measure $\chi $ is the L{\'{e}}vy measure of the law
of $X_{0}$.
\end{proof}

\begin{acknowledgement}
The author is very grateful to unknown referee for pointing out numerous
misprints and nonsense as well as forcing author to give probabilistic
interpretation of some technical assumptions.
\end{acknowledgement}

\end{document}